\newcommand{\mfa}{\mathfrak{a}}
\newcommand{\NN}{\mathbb{N}}
\newcommand{\ZZ}{\mathbb{Z}}
\DeclareMathOperator{\Inc}{Inc (\NN)}
\DeclareMathOperator{\Sym}{Sym}
\newcommand{\la}{\langle}
\newcommand{\ra}{\rangle}
\newtheorem{theorem}{Theorem}[section]
\newtheorem{lemma}[theorem]{Lemma}
\newtheorem{proposition}[theorem]{Proposition}
\newtheorem{corollary}[theorem]{Corollary}
\newtheorem{lem-def}[theorem]{Lemma and Definition}
\newtheorem{prop-def}[theorem]{Proposition and Definition}
\theoremstyle{definition}
\newtheorem{remark}[theorem]{Remark}
\newtheorem{rem-def}[theorem]{Remark and Definition}
\newtheorem{example}[theorem]{Example}
\newtheorem{question}[theorem]{Question}
\newcommand{\twodigit}[1]{\INTVAL=#1\relax\ifnum\INTVAL<10 0\fi\the\INTVAL}
\newcommand\rightnow{
            \twodigit{\the\HOUR}:\twodigit{\the\MINUTE},
            \twodigit{\number\day}.\space
            \ifcase\month\or January\or February\or March\or April\or
May\or June\or July\or August\or September\or October\or November\or
December\fi
            \space\number\year}
\begin{document}

\author{Sema G\"unt\"urk\"un, Uwe Nagel}
\title{Equivariant Hilbert Series of Monomial Orbits} 

\address{{\bf Sema G\"unt\"urk\"un, }Department of Mathematics, University of Michigan, 
530 Church Street East Hall, Ann Arbor, MI 48109, USA}
\email{gunturku@umich.edu}
\address{{\bf Uwe Nagel, }Department of Mathematics,
University of Kentucky,
715 Patterson Office Tower, Lexington, KY 40506-0027, 
 USA}
\email{uwe.nagel@uky.edu}

\thanks{The first author was partially supported by Simons Foundation grant \#317096.}

\keywords{Hilbert function,  polynomial ring, monoid, invariant ideal,  Krull dimension, degree, multiplicit}
\subjclass[2010]{13F20, 13A02, 13D40, 13A50}


\begin{abstract} 
The equivariant Hilbert series of an ideal generated by an orbit of a monomial under the action of the monoid $\Inc$ of strictly increasing functions is determined. This is used to find the dimension and degree of such an ideal. The result also suggests that the description of the denominator of an equivariant Hilbert series of an arbitrary $\Inc$-invariant ideal as given by Nagel and R\"omer is rather efficient. 
\end{abstract}

\maketitle 


\section{Introduction}

For a polynomial ring over a field $K$ in finitely many variables, Hilbert showed that its ideals are finitely generated and the vector space dimensions of graded components  of its homogeneous ideals eventually grow polynomially. Equivalently, their Hilbert series are rational. Recently, analogs of these results have been established for certain ideals in polynomials rings in infinitely many variables. 

To describe this more precisely, fix a positive integer $c \ge 1$ and consider a polynomial ring $K[X] = K[X_{[c] \times \NN}] = K[x_{i,j} \; | \; 1 \le i \le c,\ 1 \le j]$. Let $\Inc$ be the monoid of strictly increasing functions on the set $\NN$ of positive integers
\[
\Inc = \{\pi: \NN \to \NN \; | \; \pi (i) < \pi (i+1) \text{ for all } i \ge 1\}. 
\]
Setting $\pi \cdot x_{j, k} = x_{j, \pi (k)}$ induces an action of $\Inc$ on $K[X]$. In \cite{C} and \cite{HS} it is shown that any $\Inc$-invariant ideal $I$ of $K[X]$ is generated by finitely many orbits.  This and related results are of great interest, for example, in algebraic statistics, the study of tensors, or in representation theory (see, e.g., \cite{Draisma-factor, DK,  HS, SS-14, S}). 

If $I$ is a homogeneous ideal,  in \cite{NR} an equivariant Hilbert series of $K[X]/I$ has been defined as a formal power series in two variables
\[
H_{K[X]/I} (s, t) =  \sum_{n \ge 0,\, j \ge 0} \dim_K [K[X_n]/I_n]_j \cdot s^n t^j, 
\]
where $K[X_n] = K[X_{[c] \times [n]}] = K[x_{i,j} \; | \; 1 \le i \le c,\ 1 \le j \le n]$ and $I_n = I \cap K[X_n]$. Note that any ideal of $K[X]$ that is invariant under the action of $\Sym (\infty)$ (induced by moving the column indices of the variables) is also $\Inc$-invariant. For any homogeneous $\Inc$-invariant ideal of $K[X]$,  it has been shown in \cite{NR} that its equivariant Hilbert series is rational of the form
\begin{equation}
    \label{eq: general equiv Hilb series}
H_{K[X]/I} (s, t) =  \frac{g(s, t)}{  (1-t)^a \cdot \prod_{j =1}^b \big[(1-t)^{c_j} - s \cdot f_j (t) \big]},
\end{equation}
where $a, b, c_j$ are non-negative integers with $c_j \le c$, \ $g (s, t) \in \ZZ[s, t]$, and each $f_j (t)$ is a polynomial in $\ZZ[t]$ satisfying $f_j (1) >  0$. 

This form has been used in \cite{NR} to show in particular that the dimension of $K[X_n]/I_n$  eventually grows linearly in $n$ and that the limit $\lim\limits_{n \to \infty} \sqrt[n]{\deg I_n}$ exists and is a positive integer. However, the equivariant Hilbert series is explicitly known for only a few ideals. Furthermore, a different argument for the rationality of the Hilbert series $H_{K[X]/I} (s, t)$ has been given more recently in \cite{KLS}, but without  a more precise description of the rational function. The authors wonder about a good description of its denominator. In order to begin addressing  these issues we consider any ideal $I$ that is generated by the $\Inc$ orbit of some monomial of $K[X]$. For ease of notation, let us focus on the case $c=1$ in this introduction and write $x_j$ for $x_{1, j}$. Let  $I$ be the ideal generated by the orbit of a monomial $x_{\mu_1}^{a_1}x_{\mu_2}^{a_2}\cdots x_{\mu_r}^{a_r}$, where $\mu_1 < ... < \mu_r$ and $r, a_1,..,a_r \in \NN$, which we write as $I = \la \Inc \cdot x_{\mu_1}^{a_1}x_{\mu_2}^{a_2}\cdots x_{\mu_r}^{a_r} \ra$. For example, if $I = \la \Inc \cdot x_3^2 x_5^4 x_8\ra$, then one gets  
\[
I_n = \begin{cases}
\la x_{i_1}^2 x_{i_2}^4 x_{i_3} \; | \; 3 \le i_1, i_2 - i_1 \ge 2, i_3 - i_2 \ge 3,  \ i_3 \le n \ra & \text{ if } n \ge 8 \\
0 & \text{ if } 0 \le n < 8. 
\end{cases}
\]
As a special case of our main result (see Theorem \ref{thm2}), one gets for such ideals:  

\begin{theorem}
      \label{thm:intro}
If $I = \la \Inc \cdot x_{\mu_1}^{a_1}x_{\mu_2}^{a_2}\cdots x_{\mu_r}^{a_r} \ra$, then       
\[ 
H_{K[X]/I} (s,t) = \displaystyle\frac{g (s,t)}{(1-t)^{\mu_r-1} \prod\limits_{j=1}^{r}\Big[ 1-s \cdot (1+t+...+t^{a_j-1})\Big]}, 
\] 
where $g (s, t) \in \ZZ[s, t]$ is a polynomial that is not divisible by any of the indicated irreducible factors of the denominator. 
\end{theorem}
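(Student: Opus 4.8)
The strategy is to compute $H_{K[X]/I}(s,t)$ directly from a combinatorial description of $K[X_n]/I_n$ as a $K$-vector space, using a monomial basis. For fixed $n$, the monomials not in $I_n$ form a $K$-basis of $K[X_n]/I_n$, so I would first set up a bijective/recursive description of these monomials. The key observation is that a monomial $x_{\mu_1}^{a_1}\cdots x_{\mu_r}^{a_r}$ generates $I_n$ via the $\Inc$-orbit, and a monomial $m \in K[X_n]$ lies in $I_n$ iff it is divisible by some $\pi \cdot (x_{\mu_1}^{a_1}\cdots x_{\mu_r}^{a_r})$ with $\pi(\mu_r) \le n$; equivalently, reading the exponent vector of $m$ from left to right, one can "match" the pattern of required exponents $a_1, \dots, a_r$ at some increasing sequence of positions respecting the gap constraints $\pi(\mu_{k+1}) - \pi(\mu_k) \ge \mu_{k+1} - \mu_k$ and $\pi(\mu_1) \ge \mu_1$.

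**Main steps.** (1) Reduce to a transfer-matrix / automaton count: build a finite automaton whose states record "how much of the pattern has been matched so far" together with the bounded amount of gap-slack still owed, reading the variables $x_1, x_2, \dots, x_n$ one at a time and choosing the exponent of each. A monomial is counted iff the automaton never reaches the accepting (=in $I$) state. (2) Translate the generating function of walks in this automaton, with $s$ marking the column index step and $t$ marking total degree, into a rational function $P(s,t)/Q(s,t)$; the denominator $Q(s,t)$ comes from $\det(\mathrm{Id} - s \cdot M(t))$ for the transition matrix $M(t)$. (3) Identify the factors: the $\mu_r - 1$ "free" initial columns (before the pattern can possibly start, given $\pi(\mu_1)\ge\mu_1$) contribute a plain $(1-t)^{\mu_r-1}$ after accounting for the geometric series $\frac{1}{1-t}$ in each such variable, while each pattern step $k$ — where one may place exponent $0, 1, \dots, a_k - 1$ "safely" but exponent $\ge a_k$ advances the match — contributes the factor $1 - s(1 + t + \cdots + t^{a_k - 1})$. (4) Finally, argue the numerator $g(s,t)$ has integer coefficients (clear, since everything is a $\ZZ$-linear count) and shares no factor with the denominator; for the latter I would either exhibit, for each irreducible factor, a specialization of $s$ or $t$ at which the Hilbert series genuinely has a pole (e.g. using that $\dim K[X_n]/I_n$ grows like a polynomial of the predicted degree $\mu_r - 1$ in $n$, forcing the pole of $H$ at $t=1$, $s=1$ to have exactly that order), or invoke the uniqueness part of the general form \eqref{eq: general equiv Hilb series} together with a dimension/degree computation.

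**Alternative, cleaner route.** Rather than a raw automaton, I would more likely proceed inductively on $r$ (the number of "blocks" in the pattern) using a short exact sequence or a direct decomposition: peel off the contribution of the first variable allowed to carry exponent $\ge a_1$, relating $K[X]/I$ to the analogous quotient for the shorter pattern $x_{\mu_2}^{a_2}\cdots x_{\mu_r}^{a_r}$ shifted appropriately. Each such step should multiply the denominator by one factor $1 - s(1+t+\cdots+t^{a_k-1})$ and track the shift of column indices, which accumulates to the $(1-t)^{\mu_r-1}$; the base case $r=0$ (where $I = K[X]$ or $I=0$) is immediate. This mirrors how one expects Theorem \ref{thm2} to be proved in the body, and the $c>1$ case should follow by the same bookkeeping with $c$ variables per column.

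**Expected main obstacle.** The bookkeeping of the gap constraints $\pi(\mu_{k+1}) - \pi(\mu_k) \ge \mu_{k+1} - \mu_k$ is the delicate part: it means the automaton must carry, besides the number of matched blocks, the "debt" of columns that must still be skipped before the next block may be matched, and one must check this debt stays bounded and that its dynamics do not introduce spurious denominator factors — in particular that the extra skipped columns only contribute powers of $(1-t)$ and never a factor involving $s$. Verifying that the resulting numerator is genuinely coprime to the denominator (rather than that some cancellation lowers the pole order) is the other subtle point, and is where I expect to need the growth estimates for $\dim K[X_n]/I_n$ or the uniqueness of the form \eqref{eq: general equiv Hilb series}.
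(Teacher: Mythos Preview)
Your ``alternative, cleaner route'' is essentially what the paper does, with one important twist: the paper's induction on $r$ peels off the \emph{last} block rather than the first. The key identity (Lemma~\ref{lem1}) is $I_n = \langle I_{n-1}\rangle_{K[X_n]} + x_n^{a_r}\langle J_{n-\delta_r}\rangle_{K[X_n]}$, where $J$ is generated by the orbit of the truncated monomial $x_{\mu_1}^{a_1}\cdots x_{\mu_{r-1}}^{a_{r-1}}$ and $\delta_r=\mu_r-\mu_{r-1}$. Multiplication by $x_n^{a_r}$ then gives a short exact sequence relating $A_n$, $A_{n-1}$, and $B_{n-\delta_r}$, hence the recursion $H_{A_n}(t)=(1+t+\cdots+t^{a_r-1})H_{A_{n-1}}(t)+\tfrac{t^{a_r}}{(1-t)^{\delta_r}}H_{B_{n-\delta_r}}(t)$, and summing against $s^n$ yields a closed relation between $H_A(s,t)$ and $H_B(s,t)$. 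Peeling from the right is the natural choice here because $\Inc$ only pushes indices upward, so the role of the newest variable $x_n$ is transparent; your proposal to peel from the left would force you to re-index the whole orbit at each step. More importantly, the paper's route produces an \emph{explicit} numerator $\widetilde g(s,t)=(1-t)^{\mu_r-r}\prod_{i=1}^r(1-t-s+st^{a_i})-s^{\mu_r}t^{\sum a_i}$, something your automaton sketch would not deliver so directly. (Your transfer-matrix approach is in the spirit of \cite{KLS} and is a legitimate alternative, but identifying the eigenvalue factors with the specific polynomials $1-s(1+\cdots+t^{a_j-1})$ would require essentially the same recursive unwinding.)

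The coprimality step is where your plan has a genuine gap. The paper does not appeal to growth estimates or to the uniqueness of the form \eqref{eq: general equiv Hilb series}; once the explicit $\widetilde g(s,t)$ is in hand, it simply substitutes $s=(1-t)/(1-t^{a_i})$ and computes $\widetilde g\big((1-t)/(1-t^{a_i}),t\big)=-\big(\tfrac{1-t}{1-t^{a_i}}\big)^{\mu_r}t^{\sum a_j}\neq 0$, so no factor $1-s(1+t+\cdots+t^{a_i-1})$ divides $\widetilde g$; and substituting $s=1-t$ shows the auxiliary factor $(1-t-s)$ \emph{does} divide it, giving the reduced form. Your suggested route via pole orders at $t=1$ or via $\dim K[X_n]/I_n$ would at best handle the $(1-t)$ factor; it is unclear how you would separate the individual factors $1-s(1+\cdots+t^{a_j-1})$ from one another this way, particularly when several of the $a_j$ coincide and the denominator has repeated irreducible factors.
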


We also determine the numerator polynomial $g(s, t)$ (see Theorem \ref{thm1}). For instance, if  $I = \la \Inc \cdot x_3^2 x_5^4\ra$ one gets (see, e.g.,  Example \ref{exa:numerator pol small r basic case})
\begin{align}
   \label{eq:expl Hilb series}
   H_{K[X]/I} (s, t) &= \frac{ (1-t)^4 + s (1-t)^3 (-1 + t^2 + t^4) + s^2 t^6 (1-t)^2 + s^3 t^6  (1-t) + s^4 t^6}{(1-t)^4 \cdot  [1 - s (1+t)] \cdot [1 - s (1+t + t^2 +t^3)]}. 
\end{align}

The Hilbert series in the case of  an arbitrary mononomial when $c \ge 1$ is qualitatively of the same form as in the case where $c=1$ (see Theorem \ref{thm2}). 

Let us compare the above result with the form of the equivariant Hilbert series of an arbitrary $\Inc$-invariant ideal as given in Equation \eqref{eq: general equiv Hilb series}. Example 7.3 in \cite{NR}  shows that there is no a priori bound on the degree of the polynomials $f_j$ appearing in the denominator and that they can have negative coefficients. Theorem \ref{thm:intro} establishes that the number of irreducible factors in the denominator can be arbitrarily large. Thus, the description of the denominator in Equation \eqref{eq: general equiv Hilb series} seems rather efficient. 

It is instructive to compare our results with the case of a noetherian graded hypersurface ring $A = K[y_1,\ldots,y_m]/\la f\ra$. It is a Cohen-Macaulay ring of dimension $m-1$, and  its multiplicity (degree) is $\deg f$. This information can be read off from its Hilbert series, which is $H_A (t) = \frac{1+t+ \cdots + t^{\deg f -1}}{(1-t)^{m-1}}$.   If $I$ is generated by the $\Inc$-orbit of a monomial, then $\dim K[X_n]/I_n = n (c-1) + \mu_r-1$, and so the growth is dominated by $c-1$. However, the degrees of the ideals $I_n$ eventually grow exponentially in $n$, and if $c=1$ (so $I = \la \Inc \cdot x_{\mu_1}^{a_1}x_{\mu_2}^{a_2}\cdots x_{\mu_r}^{a_r} \ra$)  the growth rate is dominated by 
\[
\lim_{n \to \infty} \sqrt[n]{\deg I_n} = \max  \{a_1,...,a_r\},  
\]
which is \emph{not} the degree of the orbit generator if $r \ge 2$. Again, there is a similar formula in the general case $c \ge 1$ (see Corollary \ref{cor:asympt degree}). Notice that even though each  $K[X_n]/I_n$ is Cohen-Macaulay the numerator polynomial of the Hilbert series of $K[X]/I$ in reduced form can have negative coefficients, as it is the case in Formula \eqref{eq:expl Hilb series}. However, the polynomials $f_j$ appearing in the irreducible factors of the denominator have only non-negative coefficients (see also Remark \ref{rem:non-neg coeff}). 

The proofs of rationality of an equivariant Hilbert series in \cite{NR} and \cite{KLS} both lead to an algorithm for computing it. However, here we develop a different method that makes the computations efficient. This is first carried out in Section \ref{sec:c=1} if $c =1$. We discuss this simpler case separately in order to stress the ideas and to simplify notation. 
The general case is treated in Section \ref{sec:gen case}. In some sense we are able to reduce it to the case where $c = 1$.


\section{A Special Case} 
\label{sec:c=1}

In this section we consider the special case where $c=1$, that is, the ring $K[X]$ has only one row of variables. Thus, we simplify notation and let $K[X] = K[x_j \: | \; j \in \NN]$. Any monomial in $K[X]$ can be written as $x_{\mu_1}^{a_1}x_{\mu_2}^{a_2}\cdots x_{\mu_r}^{a_r}$, where $\mu_1 < ... < \mu_r$ and $r, a_1,..,a_r \in \NN$. The $\Inc$-invariant  ideal $I$ of $K[X]$ generated by the orbit of this monomial is
\[
I = \la \Inc \cdot x_{\mu_1}^{a_1}x_{\mu_2}^{a_2}\cdots x_{\mu_r}^{a_r} \ra.
\]
Set $\underline{\mu} = (\mu_1,\ldots,\mu_r)$. 

Denote the set of non-negative integers by $\NN_0$. So, for $n \in \NN_0$, one has $K[X_n] = K[x_j \; | \; 1 \le j \le n]$. In particular, $K[X_0] = K$. Since $\Inc$ acts on $K[X]$ by $\pi \cdot x_j = x_{\pi(j)}$, we get the following explicit description of the ideal $I_n = I \cap K[X_n]$: 
\[
I_n = \begin{cases} 
\langle x_{i_1}^{a_1}x_{i_2}^{a_2}\cdots x_{i_r}^{a_r}  \hspace*{.1in} | \hspace*{.1in} \mu_1 \le i_1,\ i_r \leq n,\  \text{ and }  i_{j+1}-i_{j} \ge \mu_{j+1} - \mu_j \text{ for each } j  \rangle  &\text{ if } n <  \mu_r \\  
0, &\text{ if }0 \leq \mu_r <n,
\end{cases} 
\] 
Similarly, if $r\geq 2$, we also consider the ideal 
 \[
 J= \langle \text{Inc} \cdot x_{\mu_1}^{a_1}x_{\mu_2}^{a_2}\cdots x_{\mu_{r-1}}^{a_{r-1}}\rangle \subset K[X]
 \]
and $J_n = J\cap K[X_n]$  for $n \in \NN_0$.  The above description of the ideals $I_n$ immediately  gives the following simple, but very useful observation.

\begin{lemma}
     \label{lem1}
If $n\geq 1$, then 
\[ 
I_n = \langle I_{n-1} \rangle_{K[X_n]} + x_n^{a_r} \langle J_{n-\delta_r}\rangle _{K[X_n]},
\]
where $\delta_r := \mu_r-\mu_{r-1}\ge 1$ and $J_n$ is defined as the zero ideal if $n < 0$. 
\end{lemma}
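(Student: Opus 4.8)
The plan is to establish the claimed identity of ideals by a double inclusion, working directly with the monomial generators described just above the statement. First I would fix $n \ge 1$ and unwind the three ideals involved. By definition, $I_n$ is generated (over $K[X_n]$) by the monomials $m = x_{i_1}^{a_1} \cdots x_{i_r}^{a_r}$ with $\mu_1 \le i_1$, $i_r \le n$, and $i_{j+1} - i_j \ge \delta_{j+1} := \mu_{j+1} - \mu_j$ for each $j$. Split these generators according to whether $i_r < n$ or $i_r = n$. The generators with $i_r \le n-1$ are precisely the generators of $I_{n-1}$, viewed inside $K[X_n]$, so they contribute exactly $\langle I_{n-1}\rangle_{K[X_n]}$. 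For the generators with $i_r = n$, write $m = x_n^{a_r} \cdot (x_{i_1}^{a_1} \cdots x_{i_{r-1}}^{a_{r-1}})$; the constraint $i_r - i_{r-1} \ge \delta_r$ becomes $i_{r-1} \le n - \delta_r$, while the remaining constraints $\mu_1 \le i_1$ and $i_{j+1} - i_j \ge \delta_{j+1}$ for $j \le r-2$ are exactly the defining constraints for a generator of $J_{n - \delta_r}$ (recall $J = \langle \Inc \cdot x_{\mu_1}^{a_1}\cdots x_{\mu_{r-1}}^{a_{r-1}}\rangle$). Hence these generators contribute exactly $x_n^{a_r}\langle J_{n-\delta_r}\rangle_{K[X_n]}$.

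From this generator-by-generator matching, the inclusion ``$\subseteq$'' is immediate, since every generator of $I_n$ lies in one of the two summands. For ``$\supseteq$'', I would check that each generator of $\langle I_{n-1}\rangle_{K[X_n]}$ is a generator of $I_n$ (trivially, an $i_r \le n-1$ tuple also satisfies $i_r \le n$), and that each element $x_n^{a_r} g$ with $g$ a generator of $J_{n-\delta_r}$ is a generator of $I_n$ (the $r$-tuple $(i_1, \dots, i_{r-1}, n)$ satisfies all the required inequalities, using $i_{r-1} \le n - \delta_r$ to get $n - i_{r-1} \ge \delta_r$). One has to be slightly careful about the edge cases: if $n < \mu_r$ then $I_n = 0$, and correspondingly $I_{n-1} = 0$ and $n - \delta_r < \mu_{r-1}$ forces $J_{n-\delta_r} = 0$ (or $J_{n-\delta_r}$ is the zero ideal by the convention for negative index), so the identity reads $0 = 0$. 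Similarly when $r \ge 2$ is needed for $J$ to be defined; if $r = 1$ the statement should be read with the convention that the empty-product monomial is $1$, so $J_n = K[X_n]$ for $n \ge 0$, and the formula $I_n = \langle I_{n-1}\rangle + x_n^{a_1}\langle K[X_{n-\mu_1+1}]\rangle$ — here $\delta_1$ should be interpreted as $\mu_1$ — still holds; I would either note this or simply restrict to $r \ge 2$ as the surrounding text does.

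I do not expect a genuine obstacle here: the lemma is essentially a bookkeeping statement, and the only thing that requires attention is making the correspondence between generators precise, including the degenerate ranges of $n$ and the convention $J_n = (0)$ for $n < 0$. The one point worth stating carefully is that passing from ``the listed monomials generate $I_n$'' to ``$I_n$ equals the sum of the two extension ideals'' uses nothing more than the fact that if a set $S$ of elements of a ring $R$ is partitioned as $S = S_1 \cup S_2$, then $\langle S\rangle = \langle S_1\rangle + \langle S_2\rangle$; and that extension of ideals along $K[X_{n-1}] \hookrightarrow K[X_n]$ (resp.\ $K[X_{n-\delta_r}] \hookrightarrow K[X_n]$) commutes with taking the ideal generated by a set of monomials. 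So the write-up is short: state the two summands as the images of the two parts of the generating set, invoke the explicit description of $I_n$ from the paragraph preceding the lemma, and record the conventions handling $n < \mu_r$ and $n - \delta_r < 0$.
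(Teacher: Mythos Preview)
Your proposal is correct and matches the paper's approach: the paper simply remarks that the identity follows immediately from the explicit description of the generators of $I_n$, and your argument is precisely the careful unpacking of that observation via the split $i_r < n$ versus $i_r = n$. Your attention to the edge cases ($n < \mu_r$, the convention $J_n = 0$ for $n<0$, and the $r=1$ situation) goes beyond what the paper spells out but is entirely in the same spirit.
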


Recall that the \emph{Hilbert series} of a proper homogeneous ideal $\mfa$ of $K[X_n]$ is defined as the formal power series 
\[
H_{K[X_n]/\mfa} (t) = \sum_{j \ge 0} \dim_K [K[X_n]/\mfa]_j \cdot t^j. 
\]
Hilbert showed that it is a rational function of the form $H_{K[X_n]/\mfa} (t) = \frac{f(t)}{(1-t)^d}$, where $f(t) \in \ZZ[t]$ and $d \in \NN_0$. We say that $H_{K[X_n]/\mfa}(t)$ is in \emph{reduced} form if the numerator and denominator are relatively prime or, equivalently, if $f(1) \neq 0$. In this case  $d$ is the Krull dimension of $K[X_n]/\mfa$ and $f(1) \ge 1$ is the \emph{degree} of $\mfa$ or \emph{multiplicity} of $K[X_n]/\mfa$.  In particular, the zero ideal has degree one. 

\begin{corollary}
       \label{cor2}
\begin{itemize}
\item[(a)] If $n\geq \mu_r$, then $A_n := K[X_n]/I_n$ is a Cohen-Macaulay ring of dimension $\mu_r-1$. 

\item[(b)]       Setting $B_n := K[X_n]/J_n$, one gets for the Hilbert series if $n\ge \delta_r$
\[
H_{A_n}(t)  = (1+t+\cdots+t^{a_r-1})H_{A_{n-1}}(t) + \displaystyle\frac{t^{a_r}}{(1-t)^{\delta_r}}H_{B_{n-\delta_r}}(t). 
\]
\end{itemize}       
 \end{corollary}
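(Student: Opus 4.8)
The plan is to prove (b) first by a direct count of the monomials outside $I_n$, and then to deduce (a) from (b) together with a depth estimate extracted from Lemma~\ref{lem1}.

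For (b), a $K$-basis of $A_n = K[X_n]/I_n$ is given by the monomials of $K[X_n]$ not lying in $I_n$. I would sort such a monomial $m$ by the exponent $k$ of $x_n$ it carries, writing $m = m' x_n^k$ with $m'$ a monomial of $K[X_{n-1}]$. By Lemma~\ref{lem1}, $m \notin I_n$ exactly when $m' \notin I_{n-1}$ and, in addition, $m' \notin \langle J_{n-\delta_r}\rangle_{K[X_{n-1}]}$ whenever $k \ge a_r$. The key observation is the inclusion $I_{n-1} \subseteq \langle J_{n-\delta_r}\rangle_{K[X_{n-1}]}$: any minimal generator $x_{i_1}^{a_1}\cdots x_{i_r}^{a_r}$ of $I_{n-1}$ has $i_{r-1} \le i_r - \delta_r \le (n-1)-\delta_r$, hence is divisible by the generator $x_{i_1}^{a_1}\cdots x_{i_{r-1}}^{a_{r-1}}$ of $J_{n-\delta_r}$ (and if $I_{n-1}=0$ the inclusion is trivial). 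Consequently the monomials $m' x_n^k$ with $0 \le k \le a_r-1$ are $\{1,x_n,\dots,x_n^{a_r-1}\}$ times a basis of $A_{n-1}$, contributing $(1+t+\cdots+t^{a_r-1})H_{A_{n-1}}(t)$, while those with $k \ge a_r$ are $x_n^k$ times the monomials of $K[X_{n-1}]$ avoiding $\langle J_{n-\delta_r}\rangle$. Since $J_{n-\delta_r}$ involves only $x_1,\dots,x_{n-\delta_r}$, the latter monomials factor uniquely as (a monomial of $K[X_{n-\delta_r}]$ avoiding $J_{n-\delta_r}$) times (an arbitrary monomial in $x_{n-\delta_r+1},\dots,x_{n-1}$), contributing
\[
\Big(\sum_{k \ge a_r}t^k\Big)\cdot\frac{1}{(1-t)^{\delta_r-1}}\cdot H_{B_{n-\delta_r}}(t) = \frac{t^{a_r}}{(1-t)^{\delta_r}}\,H_{B_{n-\delta_r}}(t).
\]
Adding the two contributions gives the recursion in (b).

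For (a) I would argue by a double induction: on $r$, and for fixed $r$ on $n \ge \mu_r$. If $r=1$, then $I_n = (x_{\mu_1}^{a_1},\dots,x_n^{a_1})$ for $n \ge \mu_1$, so $A_n$ is the tensor product over $K$ of the polynomial ring $K[x_1,\dots,x_{\mu_1-1}]$ with the Artinian ring $K[x_{\mu_1},\dots,x_n]/(x_{\mu_1}^{a_1},\dots,x_n^{a_1})$, hence Cohen--Macaulay of dimension $\mu_1-1$. If $n=\mu_r$, the description of $I_n$ forces $I_{\mu_r} = (x_{\mu_1}^{a_1}\cdots x_{\mu_r}^{a_r})$, a single monomial, so $A_{\mu_r}$ is a hypersurface ring, Cohen--Macaulay of dimension $\mu_r-1$. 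For the inductive step ($r \ge 2$, $n > \mu_r$), I would rewrite Lemma~\ref{lem1} as
\[
A_n \cong \bar R\big/x_n^{a_r}(\mathfrak j\bar R), \qquad \bar R := A_{n-1}[x_n],
\]
where $\mathfrak j$ is the image in $A_{n-1}$ of $\langle J_{n-\delta_r}\rangle_{K[X_{n-1}]}$. Using $I_{n-1} \subseteq \langle J_{n-\delta_r}\rangle_{K[X_{n-1}]}$ from above, one identifies $A_{n-1}/\mathfrak j \cong K[X_{n-1}]/\langle J_{n-\delta_r}\rangle \cong B_{n-\delta_r}[x_{n-\delta_r+1},\dots,x_{n-1}]$; since $J$ is generated by the orbit of a monomial with $r-1$ factors and $n-\delta_r \ge \mu_{r-1}$, the inductive hypothesis on $r$ makes this ring Cohen--Macaulay of dimension $(\mu_{r-1}-1)+(\delta_r-1) = \mu_r-2$. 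Part (b) together with the inductive hypotheses on $A_{n-1}$ and $B_{n-\delta_r}$ then forces $\dim A_n = \mu_r-1$. Finally, $x_n$ is a nonzerodivisor on $\bar R$, so $x_n^{a_r}(\mathfrak j\bar R) \cong (\mathfrak j\bar R)(-a_r)$ with $\depth(\mathfrak j\bar R) = \depth_{A_{n-1}}\mathfrak j + 1$; feeding $\depth\bar R = \depth A_{n-1}+1 = \mu_r$ and the bound $\depth_{A_{n-1}}\mathfrak j \ge \mu_r-1$ (from the depth lemma applied to $0 \to \mathfrak j \to A_{n-1} \to A_{n-1}/\mathfrak j \to 0$) into the depth lemma for $0 \to x_n^{a_r}(\mathfrak j\bar R) \to \bar R \to A_n \to 0$ yields $\depth A_n \ge \mu_r-1$. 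Hence $\depth A_n = \dim A_n$, so $A_n$ is Cohen--Macaulay.

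The part requiring the most care is the Cohen--Macaulayness in (a): the Hilbert-series recursion of (b) pins down the dimension almost for free, but turning this into a depth statement hinges on the module-theoretic decomposition $A_n \cong \bar R/x_n^{a_r}(\mathfrak j\bar R)$ and, crucially, on the identification of $A_{n-1}/\mathfrak j$ with a polynomial extension of $B_{n-\delta_r}$ — which is exactly where the inclusion $I_{n-1} \subseteq \langle J_{n-\delta_r}\rangle_{K[X_{n-1}]}$ is indispensable — so that the inductive Cohen--Macaulayness of the shorter orbit ideals can be inserted into the depth lemma.
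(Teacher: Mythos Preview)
Your argument is correct, but it differs from the paper's in both parts. For (b) you count standard monomials directly, sorting by the exponent of $x_n$ and invoking the containment $I_{n-1}\subseteq\langle J_{n-\delta_r}\rangle_{K[X_{n-1}]}$; the paper instead reads (b) off from the short exact sequence
\[
0 \longrightarrow \big(K[X_n]/\langle J_{n-\delta_r}\rangle\big)(-a_r) \xrightarrow{\;\cdot\,x_n^{a_r}\;} A_n \longrightarrow K[X_n]/\langle I_{n-1},x_n^{a_r}\rangle \longrightarrow 0,
\]
which is the module-theoretic incarnation of your monomial split (and whose exactness on the left tacitly uses the very inclusion you isolate). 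For (a) the contrast is sharper. You present $A_n$ as a quotient of $\bar R=A_{n-1}[x_n]$, bound $\depth \mathfrak j$ via one depth lemma and then $\depth A_n$ via another. The paper takes the shorter road: in the displayed sequence $A_n$ sits \emph{in the middle}, and by induction both outer terms are Cohen--Macaulay of dimension $\mu_r-1$ (the left is a polynomial extension of $B_{n-\delta_r}$, the right is $A_{n-1}\otimes_K K[x_n]/(x_n^{a_r})$), so $\dim A_n=\mu_r-1$ and $\depth A_n\ge\mu_r-1$ in one stroke. Your route is self-contained and makes the combinatorics and the role of $I_{n-1}\subseteq\langle J_{n-\delta_r}\rangle$ completely explicit; the paper's exact sequence buys the Cohen--Macaulay conclusion almost for free and avoids the auxiliary depth estimate on $\mathfrak j$.
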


\begin{proof}
Consider multiplication by $x_n^{a_r}$ on $A_n$. Lemma \ref{lem1} shows that, for $n \ge 1$, it induces a short exact sequence 
\begin{align} 
       \label{sesGen}
0 \to (K[X_n]\Big/ \langle J_{n-\delta_r}\rangle_{K[X_n]}) (-a_r) \to A_n \to  K[X_n]\Big/ \langle I_{n-1}, x_n^{a_r}\rangle_{K[X_n]} \to 0
\end{align} 
Since  the generators of the ideal $J_{n-\delta_r}$ are in $K[X_{n-\delta_r}]$, we get
\[
K[X_n]/ \langle J_{n-\delta_r}\rangle_{K[X_n]} \cong \begin{cases} 
K[X_n] & \text{ if }  0\leq n < \delta_r \\
 B_{n-\delta_r}[x_{n-\delta_r+1},...,x_n]  &\text{ if } n\geq \delta_r 
 \end{cases}
 \]
Observe also that $K[X_n]\Big/ \langle I_{n-1}, x_n^{a_r}\rangle_{K[X_n]} \cong A_{n-1} \otimes_K K[x_n]/(x_n^{a_r})$, which implies
\begin{align*}
H_{K[X_n]/ \langle I_{n-1}, x_n^{a_r}\rangle_{K[X_n]}} (t) & = H_{A_{n-1}} (t) \cdot H_{K[x_n]/(x_n^{a_r})}(t)  \\
& =  H_{A_{n -1}} (t) \cdot ( 1+t+ \cdots + t^{a_r-1}). 
\end{align*} 
Now, Sequence \eqref{sesGen} gives Claim (b). 

For proving (a), we use induction on $r\geq 1$. Let $r=1$. If $n \ge \mu_1$, then note that  $A_n = K[X_n]/\la x_{\mu_1}^{a_1}, x_{\mu_1+1}^{a_1},\cdots, x_n^{a_1}\ra$, which has dimension $\mu_1-1$.  If $r\geq 2$ and $n \ge \mu_r$, then the induction hypothesis   gives
\[
\dim K[X_n]\Big/ \langle J_{n-\delta_r}\rangle_{K[X_n]} = \dim B_{n-\delta_r} + \delta_r = \mu_{r-1} -1 + \delta_r = \mu_r -1. 
\]
The above Hilbert series computation also yields $\dim K[X_n]\Big/ \langle I_{n-1}, x_n^{a_r}\rangle_{K[X_n]}  = \dim A_{n-1}$. 
Thus, Claim (a) follows from Sequence \eqref{sesGen}.
\end{proof}

\begin{remark} 
         \label{rm3} 
In terms of Gorenstein liaison theory, Lemma \ref{lem1} says that $I_n$ is a basic double link of $\langle J_{n-\delta_r} \rangle_{K[X_n]}$ on  
$\langle  I_{n-1}\rangle_{K[X_n]}$. The  name stems from the fact that $I_n$ can be obtained from $\langle J_{n-\delta_r}\rangle_{K[X_n]}$ by two  Gorenstein links if $K[X_n]/\langle J_{n-\delta_r}\rangle _{K[X_n]}$ is generically Gorenstein (see \cite[Proposition 5.10]{KMMNP}).
\end{remark}

We are ready to establish the main result of this section. 

\begin{theorem}
       \label{thm1}
The equivariant Hilbert series of $A = K[X]/I$ is 
\[ 
H_A(s,t) = \displaystyle\frac{g_{r, \underline{a},\underline{\mu}}(s,t)}{(1-t)^{\mu_r-1} \prod\limits_{i=1}^{r}\Big[ 1-s(1+t+...+t^{a_i-1})\Big]}, 
\] 
where $g_{r, \underline{a}, \underline{\mu}} (s,t) \in \ZZ[s,t]$ is the polynomial with 
\[
g_{r, \underline{a}, \underline{\mu}} (s,t) \cdot (1-t-s)  = (1-t)^{\mu_r-r}\prod\limits_{i=1}^{r}(1-t-s+st^{a_i})-s^{\mu_r}t^{\sum\limits_{i=1}^{r}a_i}.
\]
Moreover, the above right-hand side is in reduced form, that is, the given numerator and denominator are relatively prime. 
\end{theorem}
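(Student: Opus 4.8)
The plan is to argue by induction on $r$, promoting the one-step recursion of Corollary~\ref{cor2}(b) to a functional equation for the generating series $H_A(s,t)=\sum_{n\ge 0}H_{A_n}(t)\,s^n$. Write $E_i:=1-s(1+t+\cdots+t^{a_i-1})$ and $D_i:=1-t-s+st^{a_i}$, note the identity $D_i=(1-t)E_i$, and set $A_i:=a_1+\cdots+a_i$. Since $\prod_{i=1}^rE_i=(1-t)^{-r}\prod_{i=1}^rD_i$, the asserted formula is equivalent to the statement that, with $P_r(s,t):=(1-t)^{\mu_r-r}\prod_{i=1}^rD_i-s^{\mu_r}t^{A_r}$, one has
\[
H_A(s,t)=\frac{P_r(s,t)}{(1-t)^{\mu_r-r-1}(1-t-s)\prod_{i=1}^rD_i},
\]
together with the supplementary claims that $1-t-s$ divides $P_r$ in $\ZZ[s,t]$ (so that $g_{r,\underline a,\underline\mu}:=P_r/(1-t-s)$ is the polynomial of the statement) and that $g_{r,\underline a,\underline\mu}$ is coprime to $(1-t)\prod_{i=1}^rE_i$.

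For the base case $r=1$ I would compute $H_A$ directly from the explicit description of $I_n$: here $H_{A_n}(t)=(1-t)^{-n}$ for $n<\mu_1$ and $H_{A_n}(t)=(1-t)^{-(\mu_1-1)}(1+t+\cdots+t^{a_1-1})^{n-\mu_1+1}$ for $n\ge\mu_1$, and summing the two resulting geometric series over a common denominator---using $s+(1-t-s)=1-t$ to extract a factor $1-t$---yields the claimed form. For the inductive step ($r\ge2$) I would multiply the identity of Corollary~\ref{cor2}(b) by $s^n$ and sum over $n\ge\delta_r$. Since $\delta_r=\mu_r-\mu_{r-1}\le\mu_r-1$, every omitted index $n\le\delta_r-1$ satisfies $I_n=0$, hence $H_{A_n}(t)=(1-t)^{-n}$, and these finitely many terms contribute a closed-form geometric expression; the remaining sums reassemble into $H_A(s,t)$ and into $H_B(s,t)$, the equivariant Hilbert series of the quotient by $J=\la\Inc\cdot x_{\mu_1}^{a_1}\cdots x_{\mu_{r-1}}^{a_{r-1}}\ra$. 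Solving the resulting linear equation for $H_A$ and using $1-s(1+\cdots+t^{a_r-1})=D_r/(1-t)$ produces
\[
H_A(s,t)=\sigma+\frac{s^{\delta_r-1}}{(1-t)^{\delta_r-2}D_r}+\frac{t^{a_r}s^{\delta_r}}{(1-t)^{\delta_r-1}D_r}\,H_B(s,t),\qquad\sigma:=\sum_{n=0}^{\delta_r-2}\frac{s^n}{(1-t)^n}.
\]

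The technical heart is then the inductive verification: substitute the expression for $H_B$ given by the induction hypothesis (denominator $(1-t)^{\mu_{r-1}-r}(1-t-s)\prod_{i=1}^{r-1}D_i$, numerator $P_{r-1}$), bring the three summands above over the common denominator $(1-t)^{\mu_r-r-1}(1-t-s)\prod_{i=1}^rD_i$, and check that the combined numerator collapses exactly to $P_r$. The bookkeeping uses $s^{\delta_r+\mu_{r-1}}=s^{\mu_r}$, $t^{a_r+A_{r-1}}=t^{A_r}$ and $(\mu_{r-1}-r+1)+(\delta_r-1)=\mu_r-r$, together with the key algebraic collapse
\[
\bigl[(1-t)^{\delta_r-1}-s^{\delta_r-1}\bigr]D_r+s^{\delta_r-1}(1-t-s)+t^{a_r}s^{\delta_r}=(1-t)^{\delta_r-1}D_r,
\]
obtained by writing $D_r=(1-t-s)+st^{a_r}$. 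I expect this cancellation, together with careful tracking of the exponents of $1-t$ (some of which can legitimately equal $-1$, so all identities are to be read in the fraction field of $\ZZ[s,t]$), to be the one delicate point; everything else is routine polynomial algebra.

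It remains to settle the two supplementary claims. Substituting $s=1-t$ gives $D_i|_{s=1-t}=(1-t)t^{a_i}$, whence $P_r|_{s=1-t}=(1-t)^{\mu_r}t^{A_r}-(1-t)^{\mu_r}t^{A_r}=0$; since $s-(1-t)$ is monic in $s$, the division algorithm shows $g_{r,\underline a,\underline\mu}=P_r/(1-t-s)\in\ZZ[s,t]$. Using $D_i=(1-t)E_i$ one has $P_r=(1-t)^{\mu_r}\prod_{i=1}^rE_i-s^{\mu_r}t^{A_r}$. Setting $t=1$ gives $P_r|_{t=1}=-s^{\mu_r}$, while $(1-t-s)g_{r,\underline a,\underline\mu}|_{t=1}=-s\,g_{r,\underline a,\underline\mu}|_{t=1}$; cancelling $-s$ yields $g_{r,\underline a,\underline\mu}|_{t=1}=s^{\mu_r-1}\ne0$, so $(1-t)\nmid g_{r,\underline a,\underline\mu}$. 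Finally each $E_i$, being linear in $s$ with unit constant term (hence primitive over $\ZZ[t]$), is irreducible in $\ZZ[s,t]$ and not associate to $1-t-s$, so $E_i\mid g_{r,\underline a,\underline\mu}$ would force $E_i\mid P_r$ and hence $E_i\mid s^{\mu_r}t^{A_r}$; but that is impossible, since the prime $E_i$ divides neither $s$ nor $t$ (it has nonzero constant term). This establishes the reduced form and closes the induction.
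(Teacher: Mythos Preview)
Your proposal is correct and follows essentially the same route as the paper: induction on $r$, with the base case computed directly from the explicit description of $I_n$ and the inductive step obtained by summing the recursion of Corollary~\ref{cor2}(b) into a functional equation relating $H_A$ and $H_B$, then substituting the inductive hypothesis for $H_B$ and collapsing the numerator via $D_r=(1-t-s)+st^{a_r}$. Your verification of the reduced form is in fact slightly more complete than the paper's: you explicitly check $(1-t)\nmid g_{r,\underline a,\underline\mu}$ (needed since $(1-t)^{\mu_r-1}$ sits in the denominator), which the paper does not address, and you argue $E_i\nmid g$ via irreducibility of $E_i$ and $E_i\nmid s^{\mu_r}t^{A_r}$ rather than by the paper's direct substitution $s=(1-t)/(1-t^{a_i})$; both arguments are valid and of comparable length.
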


\begin{proof}
Denote the right-hand side in the definition of $g_{r, \underline{a}, \underline{\mu}} (s,t)$ by $\widetilde{g}_{r, \underline{a}, \underline{\mu}} (s,t)$, that is, 
\[
\widetilde{g}_{r, \underline{a}, \underline{\mu}} (s,t) = (1-t)^{\mu_r-r}\prod\limits_{i=1}^{r}(1-t-s+st^{a_i})-s^{\mu_r}t^{\sum\limits_{i=1}^{r}a_i}.
\]
We first show by induction on $r \ge 1$
\begin{equation} 
     \label{eq:non-red HilbSer c=1}
H_A(s,t) = \displaystyle\frac{\widetilde{g}_{r, \underline{a},\underline{\mu}}(s,t)}{(1-t)^{\mu_r-1} (1-t-s) \prod\limits_{i=1}^{r}\Big[ 1-s(1+t+...+t^{a_i-1})\Big]}.
\end{equation}

Let $r=1$. One has $A_n = K[X_n]$ if $n < \mu_1$. If $n \ge \mu_1$, then we get
\[
A_n = K[X_n]/(x_{\mu_1}^{a_1}, x_{\mu_1+1}^{a_1},...,x_{n}^{a_1}) \cong K[X_{\mu_1 - 1}] \otimes_K \big (K[z]/\la z^{a_1} \ra \big)^{\otimes (n-\mu_1 +1)}. 
\]
Thus we obtain for the equivariant Hilbert series  
\begin{align*} 
  H_A(s,t) &= 
  \sum\limits_{n=0}^{\mu_1-1} \frac{1}{(1-t)^n}s^n + 
 \sum\limits_{n\geq \mu_1}  \frac{1}{(1-t)^{\mu_1-1}} (1+t+...+t^{a_1-1})^{n-\mu_1+1}\cdot s^{n}     &\\
 &= \sum\limits_{n=0}^{\mu_1-2}\Big( \frac{s}{1-t}\Big)^n + \Big( \frac{s}{1-t}\Big)^{\mu_1-1} \sum\limits_{n\geq \mu_1-	1}\Big[ s(1+t+...+t^{a_1-1})\Big]^{n-\mu_1+1} &\\
 &= \displaystyle\frac{1-\Big(\frac{s}{1-t}\Big)^{\mu_1-1}}{1-\frac{s}{1-t}} + \Big(\frac{s}{1-t}\Big)^{\mu_1-1}\displaystyle\frac{1}{1-s(1+t+...+t^{a_1-1})} &\\[1ex]
 &=   \displaystyle\frac{\big[(1-t)^{\mu_1-1}-s^{\mu_1-1}\big] \cdot \big[ 1-t -s(1-t^{a_1})\big] + s^{\mu_1-1}\big[1-t-s\big]}{(1-t)^{\mu_1-1}(1-t-s)\Big[1-s(1+t+....+t^{a_1-1})\Big]} &\\
 &= \displaystyle\frac{(1-t)^{\mu_1-1}(1-t-s+st^{a_1})-s^{\mu_1}t^{a_1}}{(1-t)^{\mu_1-1}(1-t-s)\Big[1-s(1+t+....+t^{a_1-1})\Big]}, 
 \end{align*}    
as desired. 

Let $r \ge 2$. Using Corollary \ref{cor2}(b), we get  
\begin{align*} 
 H_A(s,t)-1 &= \sum\limits_{n\geq 1} H_{A_n}(t)s^n \\  
                  &=  \sum\limits_{n=1}^{\delta_r -1} t^{a_r} \cdot H_{K[X_n]}(t)\cdot s^n  + \sum\limits_{n\geq \delta_r} \frac{t^{a_r}}{(1-t)^{\delta_r}}H_{B_{n-\delta_r}}(t)\cdot s^n \\
                  & \hspace*{.5cm} + \sum\limits_{n\geq 1} [1+t+...+t^{a_r-1}]\cdot H_{A_{n-1}}(t)\cdot s^n \\ 
 &= t^{a_r} \cdot \frac{s}{1-t} \cdot \displaystyle\frac{1-\Big(\frac{s}{1-t}\Big)^{\delta_r-1}}{1-\frac{s}{1-t}} + \frac{t^{a_r}}{(1-t)^{\delta_r}}s^{\delta_r}H_B(s,t) +  [1+t+...+t^{a_r-1}]\cdot s\cdot H_A(s,t). 
 \end{align*}
 Solving for the equivariant Hilbert series of $A$, a straight-forward computation gives the following recursive formula: 
\begin{eqnarray*}
     \label{recursion1}
H_A(s,t) & = 
&  \displaystyle\frac{1+ \frac{t^{a_r} s  \big [(1-t)^{\delta_r - 1} - s^{\delta_r -1} \big ]}{(1-t)^{\delta_r-1} (1-s-t)} + \frac{t^{a_r} s^{\delta_r}}{(1-t)^{\delta_r}}H_B(s,t) }
 {1-s[1+t+...+ t^{a_r-1}]}  
 \end{eqnarray*}  
Applying the induction hypothesis to $B$ and noting $\mu_r = \mu_{r-1} + \delta_r$, we get
\begin{align*}
 H_A(s,t) \cdot [1-s\cdot(1+t+...+t^{a_r-1})]    
& = 1+\displaystyle\frac{t^{a_r} s (1-t)^{\mu_{r-1} } \big [(1-t)^{\delta_r - 1} - s^{\delta_r -1} \big ]}{(1-t-s)(1-t)^{\mu_r-1}} \\
& \hspace*{.5cm}+ \frac{t^{a_r}s^{\delta_r}(1-t)^{\mu_{r-1}-r+1}\prod\limits_{i=1}^{r-1}[1-t - s + s t^{a_i} ]-s^{\mu_r}t^{\sum\limits_{i=1}^{r}a_i}}{(1-t)^{\mu_{r}-1}(1-t-s)\prod\limits_{i=1}^{r-1}\big[1-s(1+t+...+t^{a_i-1}) \big]}. 
\end{align*}
Using $(1-t) \cdot [1-s(1+t+...+t^{a_i-1}) )] = [1-t - s + s t^{a_i} ]$, this gives 
\begin{align*}
\hspace{2em}&\hspace{-2em} H_A(s,t) \cdot (1-t)^{\mu_r-1} (1-s-t) \prod_{i=1}^{r} [1-s\cdot(1+t+...+t^{a_i-1})]    \\
& = H_A(s,t) \cdot (1-t)^{\mu_r-r} (1-s-t) [1-s\cdot(1+t+...+t^{a_r-1})]  \prod_{i=1}^{r-1} [1-t-s+s t^{a_i} ]\\
& =  - s^{\mu_r} t^{\sum_{i=1}^r a_i} +  \prod_{i=1}^{r-1} [1-t-s+s t^{a_i} ] \cdot \\
& \hspace*{.7cm}\left \{  (1-t)^{\mu_r - r} (1-s-t) + t^{a_r} s (1-t)^{\mu_{r-1} - r+1} \big [(1-t)^{\delta_r - 1} - s^{\delta_r -1} \big ]  + t^{a_r} s^{\delta_r} (1-t)^{\mu_{r-1} - r + 1}  \right \} \\
& =  - s^{\mu_r} t^{\sum_{i=1}^r a_i} +  \prod_{i=1}^{r-1} [1-t-s-+s t^{a_i} ] \cdot  (1-t)^{\mu_r-r} \left \{ 1 - t - s + s t^{a_r} \right \}.  
\end{align*}
Now Equation \eqref{eq:non-red HilbSer c=1} follows.  

It remains to show that $\widetilde{g}_{r, \underline{a}, \underline{\mu}} (s,t)$ is divisible by $(1-t-s)$ in $\ZZ[s,t]$, but not by any of the polynomials $[1 - s (1+t+\cdots+t^{a_i  - 1})]$. The first claim follows because 
\begin{align*} 
   \widetilde{g}_{r, \underline{a}, \underline{\mu}} (1-t, t)  & = (1-t)^{\mu_r-r} \prod\limits_{i=1}^{r}[(1-t)-(1-t) + (1-t) t^{a_i}] - (1-t)^{\mu_r} t^{\sum\limits_{j=1}^{r}a_j}  \\
&= (1-t)^{\mu_r-r} (1-t)^{r}t^{\sum\limits_{i=1}^{r}a_i} - (1-t)^{\mu_r}t^{\sum\limits_{j=1}^{r}a_j} = 0.  
\end{align*}
In order to show the other claims we compute 
\begin{align*}
\widetilde{g}_{r, \underline{a}, \underline{\mu}} \left (\frac{1}{1+t+\cdots+t^{a_i-1}} , t \right)   & = \widetilde{g}_{r, \underline{a}, \underline{\mu}} \left (\frac{1-t}{1 - t^{a_i}}, t \right ) =  - \left (\frac{1-t}{1 - t^{a_i}} \right )^{\mu_r} \cdot t^{\sum_{i=1}^r a_i}. 
\end{align*} 
Since this is not the zero polynomial, it follows that $[1 - s(1+t+\cdots+t^{a_i})]$ does not divide $\widetilde{g}_{r, \underline{a}, \underline{\mu}} (s,t)$, as desired. 
\end{proof}  

We give the numerator polynomial in the reduced form of the Hilbert series for small $r$. 

\begin{example} 
   \label{exa:numerator pol small r basic case} 
For  $r = 1, 2, 3$, one gets
\begin{align*}
g_{1, \underline{a}, \underline{\mu}} (s,t) & = (1-t)^{\mu_1 - 1} + t^{a_1} \sum_{j=0}^{\mu_1-2} (1-t)^j s^{\mu_1 - 1- j}\\
g_{2, \underline{a}, \underline{\mu}} (s,t) & = (1-t)^{\mu_2-1} 
+ s (1-t)^{\mu_2 - 2} (-1 + t^{a_1} + t^{a_2}) 
+ t^{a_1+a_2} \cdot \sum\limits_{j=0}^{\mu_2-3} (1-t)^{ j} s^{\mu_2-1-j} \\
g_{3, \underline{a}, \underline{\mu}} (s,t) & = (1-t)^{\mu_3-1} 
+ s (1-t)^{\mu_3 - 2} (-2 + t^{a_1} + t^{a_2} + t^{a_3}) \\
& \hspace*{-.5cm} + s^2 (1-t)^{\mu_3 - 3} (1 - t^{a_1} - t^{a_2} - t^{a_3}+ t^{a_1 + a_2} + t^{a_1 + a_3} + t^{a_2 + a_3})    
 + t^{a_1+a_2+a_3} \cdot \sum\limits_{j=0}^{\mu_3-4} (1-t)^{ j} s^{\mu_3-1-j}. 
\end{align*} 
Here we use the convention that a sum $\sum_{j=0}^e$ is defined to be zero if $e < 0$. 
\end{example}  

We can also use our methods to determine the degree of each ideal $I_n$. 

\begin{proposition} 
            \label{prop:degree I_n}
If $n\ge \mu_r -1$,  then $\deg I_n$ is the coefficient of $t^{n-\mu_r+1}$ in the power series $\prod_{i=1}^{r} \frac{1}{1-a_it}$. In other words,
\[\prod_{i=1}^{r} \frac{1}{1-a_it} = \sum\limits_{n\geq \mu_r-1} \deg I_n\cdot t^{n-\mu_r+1}\]
\end{proposition}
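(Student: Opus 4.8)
The plan is to specialize the equivariant Hilbert series of Theorem \ref{thm1} at $t=1$, reading off the numerators of the ordinary Hilbert series $H_{A_n}(t)$ along the way. Multiplying the formula of Theorem \ref{thm1} by $(1-t)^{\mu_r-1}$ yields
\[
F(s,t):=(1-t)^{\mu_r-1}H_A(s,t)=\frac{g_{r,\underline{a},\underline{\mu}}(s,t)}{\prod_{i=1}^{r}\bigl[1-s(1+t+\cdots+t^{a_i-1})\bigr]},
\]
and since the denominator on the right is of the form $1+s\cdot(\text{an element of }\ZZ[t][[s]])$, its reciprocal lies in $\ZZ[t][[s]]$; hence $F(s,t)\in\ZZ[t][[s]]$, say $F(s,t)=\sum_{n\ge 0}c_n(t)\,s^n$ with $c_n(t)\in\ZZ[t]$. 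On the other hand $F(s,t)=(1-t)^{\mu_r-1}\sum_{n\ge 0}H_{A_n}(t)\,s^n$, so $c_n(t)=(1-t)^{\mu_r-1}H_{A_n}(t)$. It then suffices to (i) compute $c_n(1)$, (ii) compute $g_{r,\underline{a},\underline{\mu}}(s,1)$, and (iii) substitute $t=1$ coefficientwise in the identity $F(s,t)\cdot\prod_{i=1}^{r}[1-s(1+t+\cdots+t^{a_i-1})]=g_{r,\underline{a},\underline{\mu}}(s,t)$.

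For (i): if $0\le n<\mu_r-1$, then $I_n=0$, so $A_n=K[X_n]$ and $c_n(t)=(1-t)^{\mu_r-1-n}$ vanishes at $t=1$; if $n=\mu_r-1$, again $A_n=K[X_{\mu_r-1}]$, so $c_n(t)=1$ and $c_n(1)=1=\deg I_{\mu_r-1}$ (the zero ideal having degree one); and if $n\ge\mu_r$, then $\dim A_n=\mu_r-1$ by Corollary \ref{cor2}(a), so $c_n(t)$ is precisely the numerator of $H_{A_n}(t)$ in reduced form and $c_n(1)=\deg I_n$. Hence $\sum_{n\ge 0}c_n(1)\,s^n=\sum_{n\ge\mu_r-1}\deg I_n\cdot s^n$. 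For (ii): setting $t=1$ in the defining relation
\[
g_{r,\underline{a},\underline{\mu}}(s,t)\cdot(1-t-s)=(1-t)^{\mu_r-r}\prod_{i=1}^{r}(1-t-s+st^{a_i})-s^{\mu_r}t^{\sum_i a_i},
\]
every factor $1-t-s+st^{a_i}$ becomes $0$, so the whole first term on the right vanishes irrespective of the value of $(1-t)^{\mu_r-r}$, leaving $-s\cdot g_{r,\underline{a},\underline{\mu}}(s,1)=-s^{\mu_r}$, that is, $g_{r,\underline{a},\underline{\mu}}(s,1)=s^{\mu_r-1}$.

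For (iii): evaluating the identity $F(s,t)\cdot\prod_{i=1}^{r}[1-s(1+t+\cdots+t^{a_i-1})]=g_{r,\underline{a},\underline{\mu}}(s,t)$ coefficientwise at $t=1$ turns the left side into $\bigl(\sum_{n\ge\mu_r-1}\deg I_n\cdot s^n\bigr)\prod_{i=1}^{r}(1-a_is)$ and the right side into $s^{\mu_r-1}$, so
\[
\sum_{n\ge\mu_r-1}\deg I_n\cdot s^{n-\mu_r+1}=\prod_{i=1}^{r}\frac{1}{1-a_is},
\]
which is the assertion after renaming $s$ to $t$. The only step that needs genuine care is (i), the identification of the specialized coefficients $c_n(1)$ with the degrees $\deg I_n$: one must note that the sub-threshold terms $n<\mu_r-1$ drop out, invoke the convention $\deg 0=1$ at $n=\mu_r-1$, and use Corollary \ref{cor2}(a) to know that $\dim A_n=\mu_r-1$ for $n\ge\mu_r$ so that $c_n(t)$ really is the reduced-form numerator; the remainder is formal manipulation of power series.
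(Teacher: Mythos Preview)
Your proof is correct, and it carries out precisely the route the paper alludes to but deliberately bypasses: the paper opens its own proof with ``One can deduce this from Theorem \ref{thm1}. However, there is an easier, more direct approach,'' and then argues by induction on $r$, using the basic double link recursion $\deg I_n = a_r\deg I_{n-1} + \deg J_{n-\delta_r}$ coming from Lemma \ref{lem1} together with a Cauchy product of the geometric series $\frac{1}{1-a_r t}$ against the inductively known series for $J$.

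Your argument instead reads off the degrees by specializing $(1-t)^{\mu_r-1}H_A(s,t)$ at $t=1$; it is slick and non-inductive, and it makes transparent how the equivariant Hilbert series packages all the $\deg I_n$ at once. The paper's approach buys independence from Theorem \ref{thm1} (and hence from its somewhat lengthy proof), and it produces the degree recursion explicitly, which is reused later in the general case (Proposition \ref{prop:degree general}). One minor point worth making explicit in your write-up: the exponent $\mu_r-r$ in step (ii) is nonnegative because $\mu_1<\cdots<\mu_r$ are distinct positive integers, so $(1-t)^{\mu_r-r}$ really is a polynomial and the substitution $t=1$ causes no trouble there.
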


\begin{proof} 
One can deduce this from Theorem \ref{thm1}. However, there is an easier,  more direct approach. 

Since $I_{\mu_r - 1} = 0$ by definition, we get $\deg I_{\mu_r - 1} = 1$ for each $r \ge 1$, as claimed. To determine $\deg I_n$ for larger $n$, we use induction on $r\geq 1$. If $r=1$, then $ I_n = \langle x_{\mu_1}^{a_1}, x_{\mu_r+1}^{a_1}, ..., x_n^{a_1}\rangle$, and so $\deg I_n = a_1^{n-\mu_1+1}$. Now the geometric series gives the claim, that is, $\sum\limits_{n\geq \mu_1-1} \deg I_n t^{n-\mu_1+1} = \sum\limits_{n\geq 0}  a_1^nt^n = \frac{1}{1-a_1t}$.

Let $r\geq 2$. If $n \ge \delta_r$, then Lemma \ref{lem1} gives 
\begin{equation} 
   \label{eq:deg recursion}
\deg I_n = a_r \deg I_{n-1} + \deg J_{n-\delta_r}.
\end{equation}
By induction on $r$, one has 
\[
\prod\limits_{i=1}^{r-1} \frac{1}{1-a_it} = \sum\limits_{n\geq \mu_{r-1}-1}\deg J_n t^{n-\mu_{r-1}+1} 
=\sum\limits_{n-\delta_r\geq \mu_{r-1}-1}\deg J_{n-\delta_r} t^{n-\mu_{r}+1}. 
\]
Hence we obtain,
\begin{align*}
\prod\limits_{i=1}^{r}\displaystyle\frac{1}{1-a_it} 
 &= \Big(\sum\limits_{n-\delta_r\geq \mu_{r-1}-1}\deg J_{n-\delta_r}\cdot t^{n-\mu_r+1}\Big) \cdot \Big( \sum\limits_{k\geq 0}a_r^kt^k\Big) \\
&= \sum\limits_{n\geq \mu_r-1}\left [ \sum\limits_{i=0}^{n-\mu_r+1}a_r^{n-\mu_r+1-i}\cdot\deg J_{\mu_{r-1}-1+i}\right]t^{n-\mu_r+1}
\end{align*}
This implies our assertion because 
\[
\sum\limits_{i=0}^{n-\mu_r+1}a_r^{n-\mu_r+1-i}\cdot\deg J_{\mu_{r-1}-1+i} = \deg I_n. 
\]
Indeed, if $n= \mu_r-1$ then this formula is true since 
$a_r^{0}\cdot\deg J_{\mu_{r-1}-1} = 1 = \deg I_{\mu_r-1}$. Let $n\geq \mu_r$. Using Equation \eqref{eq:deg recursion}, one has 
\begin{align*}
\sum\limits_{i=0}^{n-\mu_r+1}a_r^{n-\mu_r+1-i}\cdot\deg J_{\mu_{r-1}-1+i} &= \deg J_{n-\mu_r+\mu_{r-1}} + a_r \sum\limits_{i=0}^{n-\mu_r} a_r^{n-\mu_r-i} \deg J_{\mu_{r-1}-1+i} \\
&= \deg J_{n-\delta_r} + a_r  \deg I_{n-1} = \deg I_n, 
\end{align*}
as desired. 
\end{proof}

One can use the last result to explicitly compute $\deg I_n$. This is easiest if $a_1,...,a_r$ are pairwise distinct.  

\begin{corollary} 
    \label{Remark-computationDegree} 
If $a_1,...,a_r$ are pairwise distinct, then \; $\deg I_n = {\displaystyle \sum\limits_{i=1}^r \frac{a_i^{n-\mu_r+r}}{\prod\limits_{j\neq i} (a_i-a_j)}}$, provided $n \ge \mu_r - 1$.   
\end{corollary}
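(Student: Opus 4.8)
The statement to prove is Corollary \ref{Remark-computationDegree}: a closed formula for $\deg I_n$ when $a_1,\dots,a_r$ are pairwise distinct. The natural approach is a partial fraction decomposition of the generating function identified in Proposition \ref{prop:degree I_n}. Indeed, that proposition tells us that
\[
\sum_{n \ge \mu_r - 1} \deg I_n \cdot t^{n - \mu_r + 1} = \prod_{i=1}^r \frac{1}{1 - a_i t},
\]
so the whole task is to read off the coefficient of $t^{n - \mu_r + 1}$ on the right-hand side. I will treat the cases $a_i = 0$ and $a_i \ne 0$ uniformly, since if some $a_i = 0$ then the corresponding factor is just $1$ and contributes nothing; but it is cleanest to first assume all $a_i \ne 0$, which is the case of genuine interest (if $r\ge 2$, at most one $a_i$ can be zero, and that reduces to the $r-1$ case after deleting the trivial factor — a remark I would add at the end if needed, or simply note the formula degenerates correctly).

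\textbf{Key steps.} First, since the $a_i$ are pairwise distinct and nonzero, the rational function $\prod_{i=1}^r (1 - a_i t)^{-1}$ has $r$ simple poles, at $t = 1/a_i$, and the partial fraction expansion takes the form $\sum_{i=1}^r \frac{c_i}{1 - a_i t}$ for constants $c_i$. Second, I compute $c_i$ by the standard residue trick: multiply both sides by $(1 - a_i t)$ and evaluate at $t = 1/a_i$, giving
\[
c_i = \prod_{j \ne i} \frac{1}{1 - a_j/a_i} = \prod_{j \ne i} \frac{a_i}{a_i - a_j} = \frac{a_i^{\,r-1}}{\prod_{j \ne i}(a_i - a_j)}.
\]
Third, expand each simple fraction as a geometric series: $\frac{c_i}{1 - a_i t} = \sum_{m \ge 0} c_i a_i^m t^m$. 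Hence the coefficient of $t^m$ in the product is $\sum_{i=1}^r c_i a_i^m = \sum_{i=1}^r \frac{a_i^{\,m + r - 1}}{\prod_{j \ne i}(a_i - a_j)}$. Fourth, set $m = n - \mu_r + 1$, which is $\ge 0$ precisely when $n \ge \mu_r - 1$, so that $m + r - 1 = n - \mu_r + r$, and invoke Proposition \ref{prop:degree I_n} to conclude $\deg I_n = \sum_{i=1}^r \frac{a_i^{\,n - \mu_r + r}}{\prod_{j \ne i}(a_i - a_j)}$, which is exactly the claimed formula.

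\textbf{Main obstacle.} Honestly there is no serious obstacle here — the result is a direct corollary, and the only thing to be careful about is the bookkeeping of exponents (getting $m + r - 1 = n - \mu_r + r$ right) and the edge case where one of the $a_i$ vanishes. If $a_i = 0$ for some $i$ then $\deg I_n$ involves $0^{n-\mu_r+r}$, which is $0$ once $n \ge \mu_r - r + 1$ and in particular for $n \ge \mu_r - 1$ when $r \ge 2$; one checks this is consistent with deleting the trivial factor $\frac{1}{1-0\cdot t} = 1$ from the product and applying the $r-1$ variable formula. For $r = 1$ with $a_1 = 0$ we simply have $I_n = 0$ and $\deg I_n = 1 = a_1^0/(\text{empty product})$, which also matches. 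So the proof is: write down the partial fraction decomposition, compute the residues, expand geometrically, match coefficients. I would present it in three or four lines.
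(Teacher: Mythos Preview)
Your proposal is correct and follows essentially the same route as the paper: partial fraction decomposition of $\prod_{i=1}^r (1-a_i t)^{-1}$, computation of the residues $C_i = a_i^{r-1}/\prod_{j\ne i}(a_i-a_j)$, geometric series expansion, and conclusion via Proposition~\ref{prop:degree I_n}. The only difference is that you discuss the edge case $a_i=0$, which in the paper's setup cannot occur since $a_1,\dots,a_r\in\NN$ by convention.
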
 
      
\begin{proof}
Using partial fractions, one can write
\[ \prod\limits_{i=1}^{r} \frac{1}{1-a_it} = \displaystyle\frac{C_1}{1-a_1t} + ...+ \displaystyle\frac{C_r}{1-a_rt},  
\]
where
\[ 
C_i = \prod\limits_{j\neq i} \displaystyle\frac{1}{1-\frac{a_j}{a_i}} = \displaystyle\frac{a_i^{r-1}}{\prod\limits_{j\neq i}(a_i-a_j)}.  
\]
Hence
\begin{align*}
\prod\limits_{i=1}^{r}\displaystyle\frac{1}{1-a_it} &= \sum\limits_{i=1}^{r}\displaystyle\frac{a_i^{r-1}}{\prod\limits_{j\neq i}(a_i-a_j)}\cdot\displaystyle\frac{1}{1-a_it} = \sum\limits_{i=1}^{r} \Big[ \displaystyle\frac{a_i^{r-1}}{\prod\limits_{j\neq i}(a_i-a_j)}\cdot\sum\limits_{k\geq 0}a_i^kt^k\Big]
\end{align*}
Now we conclude by  Proposition  \ref{prop:degree I_n}. 
\end{proof}

 \section{The General Case}
 \label{sec:gen case}
 
 We extend the results of the previous section. We use the notation established in the introduction. So we fix an integer $c \ge 1$ and consider the polynomial rings $K[X_n] =  K[x_{i,j} \; | \; 1 \le i \le c,\ 1 \le j \le n]$ and $K[X] =  K[x_{i,j} \; | \; 1 \le i \le c,\ 1 \le j]$. Any monomial of positive degree in $K[X]$ can be written as 
 \[
 x^{\underline{a}} = \prod_{i = 1}^c \prod_{j=1}^s x_{i,j}^{a_{i,j}}, 
 \]
 where $\underline{a} = (a_{i,j})$ is a $c \times s$ non-zero matrix whose entries are nonnegative integers. Denote the indices of the non-zero columns of $\underline{a}$ by $\mu_1,\ldots,\mu_r$, where $\mu_1 < \mu_2 < ... < \mu_r$. We may assume that the last column of $\underline{a}$ is not zero, that is, $\mu_r = s$ and $\underline{a} \in \NN_0^{c \times \mu_r}$. Thus, we can rewrite $x^{\underline{a}}$ more explicitly as
\[
x^{\underline{a}} = 
(x_{1,\mu_1}^{a_{1,\mu_1}}\cdots x_{1,\mu_r}^{a_{1,\mu_r}}) \cdot 
(x_{2,\mu_1}^{a_{2,\mu_1}}\cdots x_{2,\mu_r}^{a_{2,\mu_r}}) \cdots (x_{c,\mu_1}^{a_{c,\mu_1}}\cdots x_{c,\mu_r}^{a_{c,\mu_r}}). 
\]
Put $\underline{\mu}  = (\mu_1,\ldots,\mu_r)$. 
 
In order to determine the equivariant Hilbert  series of $K[X]/I$, where $I = \la \Inc \cdot x^{\underline{a}} \ra$, we also consider the ideal 
\[
J= \langle \text{Inc} \cdot \prod\limits_{i=1}^{c} \prod\limits_{j=1}^{\mu_{r-1}}x_{i,j}^{a_{i,j}}\rangle
\]
 if $r \ge 2$. Thus, we get for $I_n = I \cap K[X]$ and $J_n = J \cap K[X]$ that 
 $I_n = 0$ if $n < \mu_r$ and that $J_n = 0$ if $n < \mu_{r-1}$. Moreover, there is again a useful  relation among these ideals. 
 
 \begin{lemma}
     \label{rec-In-ses}
If $n\geq 1$, then 
\[ 
I_n := \langle I_{n-1} \rangle_{K[X_n]} + \prod\limits_{i=1}^{c} x_{i,n}^{a_i,\mu_r}  \langle J_{n-\delta_r}\rangle _{K[X_n]},
\]
where $\delta_r := \mu_r-\mu_{r-1}\ge 1$. 
\end{lemma}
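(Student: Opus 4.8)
The plan is to adapt, essentially verbatim, the proof of Lemma~\ref{lem1} to the case $c \ge 1$, since the key feature of the $\Inc$-action is that it only moves column indices and never mixes rows. First I would give the explicit description of $I_n = I \cap K[X_n]$ analogous to the one preceding Lemma~\ref{lem1}: for $n \ge \mu_r$, the ideal $I_n$ is generated by the monomials $\prod_{i=1}^c \prod_{k=1}^r x_{i,\ell_k}^{a_{i,\mu_k}}$ where $\mu_1 \le \ell_1$, $\ell_r \le n$, and $\ell_{k+1} - \ell_k \ge \mu_{k+1} - \mu_k$ for each $k$; and $I_n = 0$ for $n < \mu_r$. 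The same description applies to $J_n$ with $r$ replaced by $r-1$.

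The main step is to partition the generators of $I_n$ according to whether the largest column index $\ell_r$ used equals $n$ or is at most $n-1$. Generators with $\ell_r \le n-1$ are precisely the generators of $\langle I_{n-1}\rangle_{K[X_n]}$ (here one uses that the constraints on $\ell_1 < \cdots < \ell_{r-1} < \ell_r$ are ``downward closed'' in $\ell_r$, so a monomial generator of $I_n$ using only columns $\le n-1$ is already a generator of $I_{n-1}$, and conversely). Generators with $\ell_r = n$ contribute the monomial $\prod_{i=1}^c x_{i,n}^{a_{i,\mu_r}}$ times a monomial $\prod_{i=1}^c \prod_{k=1}^{r-1} x_{i,\ell_k}^{a_{i,\mu_k}}$ in the variables with column index $\le n - \delta_r$ (because the gap condition forces $\ell_{r-1} \le \ell_r - \delta_r = n - \delta_r$), subject only to the constraints defining $J_{n-\delta_r}$. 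Hence the second summand is exactly $\prod_{i=1}^c x_{i,n}^{a_{i,\mu_r}} \cdot \langle J_{n-\delta_r}\rangle_{K[X_n]}$, with the convention that $J_m = 0$ for $m < \mu_{r-1}$, which covers the range $1 \le n < \mu_r$ where $I_n = 0$ as well (one should double-check the edge cases $r = 1$, where $J$ is not defined and the term $\prod_i x_{i,n}^{a_{i,\mu_r}} \langle J_{n-\delta_r}\rangle$ should be read as the principal ideal $\prod_i x_{i,n}^{a_{i,\mu_1}} K[X_n]$, matching $I_n = \langle x_{i,k}^{a_{i,\mu_1}} : 1 \le i \le c,\ \mu_1 \le k \le n\rangle$; this may warrant a remark or a convention that $\mu_0 = 0$ so that $J_{n} = K[X]$ in that degenerate situation).

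I do not expect any genuine obstacle here: the statement is purely combinatorial bookkeeping about monomial ideals, and the only thing to be careful about is the boundary behavior in $n$ (the cases $n < \delta_r$, $n < \mu_r$, and $r = 1$) and making sure the generating set for $J_{n-\delta_r}$ really does match the monomials appearing after factoring out $\prod_i x_{i,n}^{a_{i,\mu_r}}$. In writing it up I would keep it to a few lines, as the authors did for Lemma~\ref{lem1}, simply stating that ``the explicit description of the ideals $I_n$ immediately gives'' the claimed identity and, if desired, adding a parenthetical reminder that a monomial generator of $I_n$ either lies in $K[X_{n-1}]$ or is divisible by $\prod_{i=1}^c x_{i,n}^{a_{i,\mu_r}}$, with the quotient lying in the appropriate $J_{n-\delta_r}$.
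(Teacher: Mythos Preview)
Your proposal is correct and matches the paper's approach exactly: the paper does not give a formal proof of Lemma~\ref{rec-In-ses} at all, treating it (like Lemma~\ref{lem1}) as an immediate consequence of the explicit description of the generators of $I_n$ and $J_n$. Your write-up simply spells out the partition-by-largest-column-index argument that the authors leave implicit, and your remarks about the edge cases ($r=1$, small $n$) are appropriate caution but not something the paper addresses either.
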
 

It follows that $I_n$ is a basic double link of $J_{n-\delta_r}$ on $I_{n-1}$ because of the following consequence. We use the notation $A_n = K[X_n]/I_n$, \ $B_n = K[X_n]/J_n$, and $b_j = \sum\limits_{i=1}^{c}a_{i,\mu_j}$ for $j=1,\ldots,r$.  Thus, $b_j$ is the total degree of the divisor of $x^{\underline{a}}$ whose factors are the variables appearing in column $\mu_j$. 

\begin{corollary}
       \label{cor:comp In and Jn}
\begin{itemize}
\item[(a)] If $n\geq \mu_r$, then $A_n$ is a Cohen-Macaulay ring of dimension $n (c-1) + \mu_r-1$. 

\item[(b)]        If $n\ge \delta_r$, then one has for the Hilbert series
\[
H_{A_n}(t) =  \frac{1-t^{b_r}}{(1-t)^c} H_{A_{n-1}}(t) + \frac{t^{b_r}}{(1-t)^{c\delta_r}}H_{B_{n-\delta_r}}(t). 
\]
\end{itemize}       
 \end{corollary}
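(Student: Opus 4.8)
The plan is to mirror, in the general $c \ge 1$ setting, the argument used to prove Corollary \ref{cor2}, now building on Lemma \ref{rec-In-ses} in place of Lemma \ref{lem1}. The starting point for both parts is the short exact sequence obtained from multiplication by the monomial $\prod_{i=1}^c x_{i,n}^{a_{i,\mu_r}}$ on $A_n$. Concretely, Lemma \ref{rec-In-ses} gives
\[
0 \to \left(K[X_n]\big/ \langle J_{n-\delta_r}\rangle_{K[X_n]}\right)(-b_r) \to A_n \to K[X_n]\big/\big\langle I_{n-1}, \textstyle\prod_{i=1}^c x_{i,n}^{a_{i,\mu_r}}\big\rangle_{K[X_n]} \to 0,
\]
where the shift is by $b_r = \sum_{i=1}^c a_{i,\mu_r}$, the total degree of the column-$\mu_r$ factor of $x^{\underline a}$. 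The key structural observations are: first, since the generators of $J_{n-\delta_r}$ involve only the variables $x_{i,j}$ with $j \le n-\delta_r$, one has $K[X_n]/\langle J_{n-\delta_r}\rangle_{K[X_n]} \cong B_{n-\delta_r}[x_{i,j} : 1\le i \le c,\ n-\delta_r < j \le n]$ when $n \ge \delta_r$, a polynomial extension by $c\delta_r$ new variables (and $\cong K[X_n]$ when $n < \delta_r$); and second, $K[X_n]/\langle I_{n-1}, \prod_i x_{i,n}^{a_{i,\mu_r}}\rangle_{K[X_n]} \cong A_{n-1} \otimes_K K[x_{1,n},\ldots,x_{c,n}]/(\prod_i x_{i,n}^{a_{i,\mu_r}})$.

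For part (b), I would translate the short exact sequence into an identity of Hilbert series, using additivity of $H(t)$ along the sequence. The quotient on the right contributes $H_{A_{n-1}}(t)\cdot H_{K[x_{1,n},\ldots,x_{c,n}]/(\prod_i x_{i,n}^{a_{i,\mu_r}})}(t)$, and the Hilbert series of the monomial complete intersection-type quotient $K[y_1,\ldots,y_c]/(y_1^{a_1}\cdots y_c^{a_c})$ is $\frac{1 - t^{a_1+\cdots+a_c}}{(1-t)^c} = \frac{1-t^{b_r}}{(1-t)^c}$ (this follows from the single relation $\prod y_i^{a_i}$ being a nonzerodivisor, so it is a hypersurface ring with numerator $1 + t + \cdots + t^{b_r-1}$). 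The submodule on the left contributes $t^{b_r}\cdot H_{K[X_n]/\langle J_{n-\delta_r}\rangle_{K[X_n]}}(t) = t^{b_r}\cdot \frac{1}{(1-t)^{c\delta_r}} H_{B_{n-\delta_r}}(t)$, using the polynomial extension by $c\delta_r$ variables. Adding the two end terms of the sequence gives exactly the claimed formula for $H_{A_n}(t)$.

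For part (a), I would use induction on $r \ge 1$, exactly as in Corollary \ref{cor2}(a). In the base case $r=1$, for $n \ge \mu_1$ the ideal $I_n$ is generated by the monomials $\prod_{i=1}^c x_{i,k}^{a_{i,\mu_1}}$ for $\mu_1 \le k \le n$, so $A_n$ is a tensor product $K[X_{\mu_1-1}] \otimes_K \bigotimes_{k=\mu_1}^n \big(K[x_{1,k},\ldots,x_{c,k}]/(\prod_i x_{i,k}^{a_{i,\mu_1}})\big)$; each hypersurface factor is Cohen-Macaulay of dimension $c-1$ and there are $n - \mu_1 + 1$ of them plus the polynomial ring in $c(\mu_1-1)$ variables, yielding $\dim A_n = c(\mu_1-1) + (n-\mu_1+1)(c-1) = n(c-1) + \mu_1 - 1$; Cohen-Macaulayness is preserved under tensor products over a field. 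For $r \ge 2$ and $n \ge \mu_r \ge \delta_r$, apply the induction hypothesis to $B$: $\dim B_{n-\delta_r} = (n-\delta_r)(c-1) + \mu_{r-1} - 1$, hence $\dim K[X_n]/\langle J_{n-\delta_r}\rangle_{K[X_n]} = \dim B_{n-\delta_r} + c\delta_r = n(c-1) + \mu_{r-1} - 1 + \delta_r = n(c-1) + \mu_r - 1$ using $\mu_r = \mu_{r-1} + \delta_r$. The Hilbert series computation in part (b) shows $\dim K[X_n]/\langle I_{n-1}, \prod_i x_{i,n}^{a_{i,\mu_r}}\rangle_{K[X_n]} = \dim A_{n-1} + (c-1) = (n-1)(c-1) + \mu_r - 1 + (c-1) = n(c-1) + \mu_r - 1$ as well (the extra $c-1$ coming from the hypersurface factor $K[x_{1,n},\ldots,x_{c,n}]/(\prod_i x_{i,n}^{a_{i,\mu_r}})$). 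So both ends of the short exact sequence have dimension $n(c-1)+\mu_r-1$, and $A_n$ sits between them; since the outer terms are Cohen-Macaular of the same dimension, the depth lemma (or the long exact sequence in local cohomology) forces $A_n$ to be Cohen-Macaulay of that dimension too.

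The main obstacle I anticipate is purely bookkeeping rather than conceptual: keeping the index shifts consistent, in particular that the module on the left of the short exact sequence is shifted by $b_r$ (the total degree of the full column-$\mu_r$ monomial) and not by some individual exponent, and that the polynomial extension introduces precisely $c\delta_r$ new variables so the denominator power is $(1-t)^{c\delta_r}$. One should also verify carefully that $A_{n-1}$ is already known to be Cohen-Macaulay when invoking the depth argument — this is fine by induction on $n$ (with the observation that for $n = \mu_r$ one has $A_{n-1}$ with $n-1 < \mu_r$, which is a polynomial ring and hence trivially Cohen-Macaulay, but one must check the dimension count still balances in this boundary case, which it does since $I_{n-1} = 0$ means $\dim A_{n-1} = c(\mu_r - 1)$ and then $\dim A_{n-1} + (c-1)$ need not equal $n(c-1)+\mu_r-1$ — here instead one argues directly that $A_n$ for $n = \mu_r$ is a tensor product as in the base case, or checks the sequence's left term dominates). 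This boundary subtlety is the one place where I would slow down and not just wave at "the depth lemma."
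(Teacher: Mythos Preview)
Your proposal is correct and follows essentially the same approach as the paper: the paper sets up the same short exact sequence from multiplication by $\prod_i x_{i,n}^{a_{i,\mu_r}}$, identifies the outer terms exactly as you do, and then simply says ``the claims follow as in the proof of Corollary \ref{cor2}.'' Your boundary worry at $n=\mu_r$ is unwarranted: since $I_{\mu_r-1}=0$ one has $\dim A_{\mu_r-1}=c(\mu_r-1)$, and then $\dim A_{\mu_r-1}+(c-1)=c\mu_r-1=\mu_r(c-1)+\mu_r-1$, so the dimensions of the two ends do match and the depth argument goes through without any special treatment.
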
 
 
 \begin{proof}
 Multiplication by $\prod\limits_{i=1}^{c} x_{i,n}^{a_i,\mu_r}$ on $A_n$ induces the exact sequence 
 \[ 
 0 \to K[X_n]\Big/ \langle J_{n-\delta_r}\rangle_{K[X_n]}(-b_r) \to A_n \to  K[X_n]\Big/ \langle I_{n-1}, \prod\limits_{i=1}^{c} x_{i,n}^{a_i,\mu_r}  \rangle_{K[X_n]} \to 0. 
 \]
 Furthermore, we have 
 \[ K[X_n]\Big/ \langle J_{n-\delta_r}\rangle_{K[X_n]} \cong \begin{cases} K[X_n]  &\text{ if } 0\leq n < \delta_r \\
 B_{n-\delta_r}\big[ x_{i,j} \, | \; 1 \le i \le c, n-\delta_r < j \le n]  &\text{ if  } n\geq \delta_r. 
\end{cases}
\]
Now the claims follow as in the proof of Corollary \ref{cor2}. 
 \end{proof} 
 
 Our main result is the promised extension of Theorem \ref{thm1}.

\begin{theorem} 
      \label{thm2} 
Setting $\underline{b} = (b_1,\ldots,b_r)$, 
 the  equivariant Hilbert series of $A = K[X]/I$ is 
\[
H_A(s,t) = \frac{g_{r, c,  \underline{b}, \underline{\mu}} (s,t)} { (1-t)^{c(\mu_r-r-1)+r} \prod\limits_{j=1}^{r}\big[ (1-t)^{c-1} - s(1+t+...+t^{b_j-1})\big]}, 
\]
where $g_{r, c,  \underline{b}, \underline{\mu}} (s,t) \in \ZZ[s, t]$ is the polynomial with 
\[
g_{r, c,  \underline{b}, \underline{\mu}}(s,t) \cdot \big[ (1-t)^c-s\big]  = (1-t)^{c(\mu_r-r)} \prod\limits_{j=1}^{r} \big[ (1-t)^c-s+st^{b_j}\big] - s^{\mu_r}t^{\sum\limits_{j=1}^{r} b_j}.
\]

Furthermore, the above rational function is in reduced form, that is, the given numerator and denominator polynomials are relatively prime. (Notice that the exponent $[c(\mu_r-r-1)+r]$ of $(1-t)$ is negative if and only if $r < c$ and $\mu_r = r$.) 
\end{theorem}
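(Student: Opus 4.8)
\textbf{Proof plan for Theorem \ref{thm2}.}
The plan is to mirror the strategy used for Theorem \ref{thm1}, since Corollary \ref{cor:comp In and Jn} already packages the case $c\ge1$ into exactly the same shape of recursion as Corollary \ref{cor2} did for $c=1$. First I would prove by induction on $r\ge1$ the ``non-reduced'' identity
\[
H_A(s,t) = \frac{\widetilde g_{r,c,\underline b,\underline\mu}(s,t)}{(1-t)^{c(\mu_r-r-1)+r}\,[(1-t)^c-s]\prod_{j=1}^r\big[(1-t)^{c-1}-s(1+t+\cdots+t^{b_j-1})\big]},
\]
where $\widetilde g_{r,c,\underline b,\underline\mu}(s,t) = (1-t)^{c(\mu_r-r)}\prod_{j=1}^r[(1-t)^c-s+st^{b_j}] - s^{\mu_r}t^{\sum b_j}$. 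For the base case $r=1$ I would use that $A_n = K[X_n]$ for $n<\mu_1$ and, for $n\ge\mu_1$, that $A_n \cong K[X_{\mu_1-1}]\otimes_K\big(K[z_1,\ldots,z_c]/\la z_1^{a_{1,\mu_1}}\cdots z_c^{a_{c,\mu_1}}\ra\big)^{\otimes(n-\mu_1+1)}$, whose one-variable Hilbert series is $\tfrac{1-t^{b_1}}{(1-t)^c}$; then sum the resulting two geometric series in $s$ (one finite, one infinite with ratio $s\cdot\tfrac{1-t^{b_1}}{(1-t)^c}=\tfrac{1}{(1-t)^c}[(1-t)^c-s+st^{b_1}]$) and clear denominators to recognize $\widetilde g_{1,c,\underline b,\underline\mu}$, exactly as in the $c=1$ computation but with $(1-t)$ replaced by $(1-t)^c$ and $a_1$ replaced by $b_1$ in the appropriate spots. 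For the inductive step I would split $H_A(s,t)-1 = \sum_{n\ge1}H_{A_n}(t)s^n$ using Corollary \ref{cor:comp In and Jn}(b) into three sums (the small-$n$ terms $n<\delta_r$ with $H_{K[X_n]}=(1-t)^{-cn}$, the $B$-terms, and the $A_{n-1}$-terms), sum each geometric series, solve the resulting linear equation for $H_A(s,t)$, substitute the inductive formula for $H_B(s,t)$ using $\mu_r=\mu_{r-1}+\delta_r$, and simplify using the identity $(1-t)^c\cdot[(1-t)^{c-1}-s(1+t+\cdots+t^{b_j-1})] = (1-t)^{c-1}\cdot[(1-t)^c-s+st^{b_j}]$ — the analog of the identity $(1-t)[1-s(1+t+\cdots+t^{a_i-1})]=[1-t-s+st^{a_i}]$ used before — to match denominators and collapse the numerator into $\widetilde g_{r,c,\underline b,\underline\mu}$.

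Second, I would show that $\widetilde g_{r,c,\underline b,\underline\mu}(s,t)$ is divisible by $[(1-t)^c-s]$ in $\ZZ[s,t]$: substituting $s=(1-t)^c$ gives $(1-t)^{c(\mu_r-r)}\prod_{j=1}^r[(1-t)^c-(1-t)^c+(1-t)^ct^{b_j}] - (1-t)^{c\mu_r}t^{\sum b_j} = (1-t)^{c(\mu_r-r)}(1-t)^{cr}t^{\sum b_j} - (1-t)^{c\mu_r}t^{\sum b_j} = 0$, so by the factor theorem (in the variable $s$ over the domain $\ZZ[t]$) the division is exact and $g_{r,c,\underline b,\underline\mu}$ is a genuine polynomial. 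This gives the claimed reduced-looking form of $H_A(s,t)$, with $(1-t)$ appearing to the power $c(\mu_r-r-1)+r$, which may be negative precisely when $r<c$ and $\mu_r=r$ — in that case $\widetilde g$ simply carries the compensating positive power of $(1-t)$, so the displayed rational function is still correct as written.

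Third, I would verify that the numerator and denominator in the theorem's statement are relatively prime, i.e. that $g_{r,c,\underline b,\underline\mu}$ is divisible by none of $(1-t)$ or the factors $[(1-t)^{c-1}-s(1+t+\cdots+t^{b_j-1})]$, and that $[(1-t)^c-s]\nmid \widetilde g/[(1-t)^c-s]$. For the factor $(1-t)$, if $c\ge 2$ I would substitute $t=1$: $\widetilde g_{r,c,\underline b,\underline\mu}(s,1) = 0 - s^{\mu_r}\cdot1 = -s^{\mu_r}$, which equals $g_{r,c,\underline b,\underline\mu}(s,1)\cdot[0-s] = -s\cdot g_{r,c,\underline b,\underline\mu}(s,1)$, forcing $g_{r,c,\underline b,\underline\mu}(s,1) = s^{\mu_r-1}\ne0$ (so $(1-t)\nmid g$; when $c=1$ this exponent $\mu_r-1$ is genuinely in the denominator and the argument is that of Theorem \ref{thm1}). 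For the factor $[(1-t)^{c-1}-s(1+t+\cdots+t^{b_j-1})]$, I would evaluate $\widetilde g$ at $s = \tfrac{(1-t)^{c-1}}{1+t+\cdots+t^{b_j-1}} = \tfrac{(1-t)^c}{1-t^{b_j}}$, which kills the $j$-th factor of the product (since $(1-t)^c - s + st^{b_j} = (1-t)^c - s(1-t^{b_j}) = 0$), leaving $\widetilde g = -\big(\tfrac{(1-t)^c}{1-t^{b_j}}\big)^{\mu_r}t^{\sum b_j}\ne0$ as a rational function in $t$; since the specialized factor divides the specialized $[(1-t)^c-s]$ only if $t^{b_j}\equiv0$, which is false, this shows the factor divides neither $\widetilde g$ nor $[(1-t)^c-s]$, hence not $g_{r,c,\underline b,\underline\mu}$. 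Finally $[(1-t)^c-s]$ itself cannot divide $g_{r,c,\underline b,\underline\mu}$, since otherwise $[(1-t)^c-s]^2\mid \widetilde g$, contradicting that $\tfrac{\partial}{\partial s}\widetilde g$ evaluated at $s=(1-t)^c$ is nonzero (a quick computation: only the $-s^{\mu_r}t^{\sum b_j}$ term and the product term's single-factor derivatives survive, and they do not cancel). The main obstacle is none of these conceptual points but the bookkeeping in the inductive step — keeping the powers of $(1-t)$, the exponents $\mu_{r-1}$ versus $\mu_r$, and the small-$n$ correction term aligned through the solve-and-substitute — exactly as in Theorem \ref{thm1}, only with $(1-t)\rightsquigarrow(1-t)^c$ and $a_i\rightsquigarrow b_i$; I would present it as a ``straightforward but lengthy computation parallel to the proof of Theorem \ref{thm1},'' displaying only the key intermediate recursive formula for $H_A(s,t)$ in terms of $H_B(s,t)$ and the final collapse.
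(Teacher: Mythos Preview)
Your proposal is correct and follows essentially the same route as the paper: induction on $r$ via Corollary \ref{cor:comp In and Jn}, the same base-case tensor decomposition, the same three-sum split and solve-for-$H_A$ maneuver with the induction hypothesis applied to $H_B$, the same substitution $s=(1-t)^c$ to extract the factor $[(1-t)^c-s]$, and the same substitution $s=(1-t)^c/(1-t^{b_j})$ to rule out the bracket factors. Your additional checks that $(1-t)\nmid g$ (via $t=1$) and that $[(1-t)^c-s]^2\nmid\widetilde g$ are not carried out explicitly in the paper's proof, so in that respect your reduced-form verification is slightly more complete than what the paper writes down.
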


\begin{proof} We argue as in the proof of Theorem \ref{thm1}. Set 
\[
\widetilde{g}_{r, c, \underline{a}, \underline{\mu}} (s,t) = (1-t)^{c(\mu_r-r)} \prod\limits_{j=1}^{r} \big[ (1-t)^c-s+st^{b_j}\big] - s^{\mu_r}t^{\sum\limits_{j=1}^{r} b_j}.
\]
Using induction on $r \ge 1$, one shows 
\begin{equation} 
     \label{eq:non-red HilbSer any c}
H_A(s,t) = \displaystyle\frac{\widetilde{g}_{r, c, \underline{a},\underline{\mu}}(s,t)}{ (1-t)^{c(\mu_r-r-1)+r} \big[ (1-t)^c-s\big]\prod\limits_{j=1}^{r}\big[ (1-t)^{c-1} - s(1+t+...+t^{b_j-1})\big]}.
\end{equation}
Indeed, let $r = 1$. If $n \ge \mu_i$, then we get 
\[
A_n \cong K[X_{ \mu_1-1} ] \otimes \Big( K[z_1,..,z_c]/(z_1^{a_{1,\mu_1}} \cdots z_c^{a_{c,\mu_1}} \Big)^{ \otimes n-\mu_1+1}, 
\]
where $z_1,\ldots,z_c$ are new variables. It follows that 
\[
H_A(s,t) = \sum\limits_{n=0}^{\mu_1-1}\frac{1}{(1-t)^{nc}}s^n + \sum\limits_{n\geq \mu_1} \frac{1}{(1-t)^{c(\mu_1-1)}}\Big(\frac{1+t+ ... +t^{b_1-1}}{(1-t)^{c-1}} \Big)^{n-\mu_1+1} s^n.   
\]
Now a computation as in the proof of Theorem \ref{thm1} gives the desired formula. 

Let $r \ge 2$. Corollary \ref{cor:comp In and Jn} implies 
 \begin{align*} 
 \hspace{2em}&\hspace{-2em} 
 H_A(s,t)-1 = \sum\limits_{n\geq 1} H_{A_n}(t)s^n \\  
                  &=  \sum\limits_{n=1}^{\delta_r -1} t^{b_r} \cdot H_{K[X_n]}(t)\cdot s^n  
                  + \sum\limits_{n\geq \delta_r} \frac{t^{b_r}}{(1-t)^{c\delta_r}}H_{B_{n-\delta_r}}(t)\cdot s^n 
                  + \sum\limits_{n\geq 1} \frac{1+t+ ... +t^{b_r-1}}{(1-t)^{c-1}}  \cdot H_{A_{n-1}}(t)\cdot s^n 
                  \\ 
 &=   t^{b_r} \cdot \frac{s}{(1-t)^c} \cdot \displaystyle \frac{1-\Big(\frac{s}{(1-t)^c}\Big)^{\delta_r-1}}{1-\frac{s}{(1-t)^c}} 
 + \frac{t^{b_r}}{(1-t)^{c\delta_r}} s^{\delta_r} H_B(s,t) + \frac{s ( 1+t+ ... +t^{b_r-1}) }{(1-t)^{c-1}} \cdot H_A(s,t). 
 \end{align*}
 This gives 
 \begin{align*}
\hspace{2em}&\hspace{-6em} 
 H_A(s,t) \cdot\frac{(1-t)^{c-1} -s(1+t+...+t^{b_r-1})}{(1-t)^{c-1}}  \\
 & =  1+ t^{b_r}s\frac{(1-t)^{c(\delta_r-1)}-s^{\delta_r-1}}{(1-t)^{c(\delta_r-1)}[(1-t)^{c}-s]}+ \frac{t^{b_r}s^{\delta_r}}{(1-t)^{c\delta_r}}H_B(s,t) 
 \end{align*}
 Applying the induction hypothesis to $B$, a  computation similar to the one in the proof of Theorem~\ref{thm1} establishes Equation \eqref{eq:non-red HilbSer any c}. 
 
 It remains to show that $\widetilde{g}_{r, c, \underline{a}, \underline{\mu}} (s,t)$ is divisible by $((1-t)^c-s)$ in $\ZZ[s,t]$, but not by any of the polynomials $[(1 -t)^{c-1} - s (1+t+\cdots+t^{a_i  - 1})]$. The first claim is true  because 
 \begin{align*}
\widetilde{g}_{r, c, \underline{a}, \underline{\mu}} ((1-t)^c,t) 
& =  (1-t)^{c(\mu_r-r)} \prod\limits_{i=1}^{r}[(1-t)^c-(1-t)^c + (1-t)^ct^{b_i}] - (1-t)^{c\mu_r}t^{\sum\limits_{j=1}^{r}b_j}  \\
&= (1-t)^{c(\mu_r-r)} (1-t)^{rc}t^{\sum\limits_{i=1}^{r}b_i}] - (1-t)^{c\mu_r}t^{\sum\limits_{j=1}^{r}b_j} = 0. 
\end{align*}
Substituing ${\displaystyle s = \frac{(1-t)^{c-1}}{1+t+...+t^{b_r-1}} = \frac{(1-t)^c}{1-t^{b_r}}}$, we get 
\begin{align*}
\widetilde{g}_{r, c, \underline{a}, \underline{\mu}}  \left (\frac{(1-t)^{c-1}}{1+t+...+t^{b_1-1}}, t \right) 
&
= - \frac{(1-t)^{(c-1) \mu_r}}{(1+t+...+t^{b_r-1})^{\mu_r}} \cdot t^{\sum\limits_{j=1}^{r}b_j}. 
\end{align*}
Since this is not the zero polynomial the argument is complete now. 
\end{proof}

Again we give the numerator polynomial in the reduced form of the Hilbert series for small $r$, where we assume that $c(\mu_r-r-1)+r \ge 0. $

\begin{example} 
   \label{exa:numerator pol small r}
For  $r = 1, 2, 3$, one has  
\begin{align*}
g_{1, c, \underline{a}, \underline{\mu}}  & =  (1-t)^{c (\mu_1 -1)} + t^{b_1} \cdot \sum_{j=0}^{\mu_1-2} (1-t)^{c j} s^{\mu_1 - 1- j}\\
g_{2, c, \underline{a}, \underline{\mu}} (s,t) & = (1-t)^{c (\mu_2-1)} 
+ s (1-t)^{c(\mu_2 - 2)} (-1 + t^{b_1} + t^{b_2}) 
+ t^{b_1+b_2} \cdot \sum\limits_{j=0}^{\mu_2-3} (1-t)^{c j} s^{\mu_2-1-j} \\
g_{3, c, \underline{a}, \underline{\mu}}  (s,t) & =  (1-t)^{c (\mu_3-1)} 
+ s (1-t)^{c(\mu_3 - 2)} (-2 + t^{b_1} + t^{b_2} + t^{b_3}) \\
& \hspace*{-.5cm} + s^2 (1-t)^{c (\mu_3 - 3)} (1 -t^{b_1} - t^{b_2} - t^{b_3}+ t^{b_1 + b_2} + t^{b_1 + b_3} + t^{b_2 + b_3})    
 + t^{b_1+b_2+b_3} \cdot \sum\limits_{j=0}^{\mu_3-4} (1-t)^{c j} s^{\mu_3-1-j}. 
\end{align*} 
Notice that these polynomials simplify if the $\mu_i$'s are as small as possible, that is, $\mu_i = i$. For example, then one gets  $g_{1, c, \underline{a}, \underline{\mu}} = 1$ and $g_{2, c, \underline{a}, \underline{\mu}} (s,t) = (1-t)^c 
+ s  (-1 + t^{b_1} + t^{b_2})$. 
\end{example}  

\begin{remark}
Observe the similarity of the formulas in Theorem \ref{thm1} and \ref{thm2}. Indeed, Theorem \ref{thm2} is formally obtained from Theorem \ref{thm1} by replacing each $a_j$ by the total column degree $b_j$ and $(1-t)$ by $(1-t)^c$. 
\end{remark}

Now we determine the degree of $I_n$. 

\begin{proposition} 
     \label{prop:degree general}
If $n\ge \mu_r -1$,  then $\deg I_n$ is the coefficient of $t^{n-\mu_r+1}$ in the power series $\prod_{j=1}^{r} \frac{1}{1-b_j t}$.    That is, 
\[
\prod\limits_{j=1}^{r}\frac{1}{1-b_j t} = \sum\limits_{n\geq \mu_r-1}\deg I_n\cdot t^{n-\mu_r+1}. 
\]
\end{proposition}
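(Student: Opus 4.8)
The plan is to mirror the proof of Proposition~\ref{prop:degree I_n}, using the recursive structure coming from Lemma~\ref{rec-In-ses} rather than Theorem~\ref{thm2}. First I would record the base case: since $I_{\mu_r-1} = 0$ by definition, $\deg I_{\mu_r-1} = 1$, which matches the constant term of $\prod_{j=1}^r \frac{1}{1-b_j t}$. I would then proceed by induction on $r \ge 1$. For $r = 1$, the ideal is $I_n = \langle z_1^{a_{1,\mu_1}}\cdots z_c^{a_{c,\mu_1}} \text{ in each column } \mu_1,\ldots,n\rangle$; since $A_n \cong K[X_{\mu_1-1}] \otimes (K[z_1,\ldots,z_c]/(z_1^{a_{1,\mu_1}}\cdots z_c^{a_{c,\mu_1}}))^{\otimes(n-\mu_1+1)}$ and the monomial complete intersection $K[z_1,\ldots,z_c]/(m)$ has degree equal to the total degree $b_1 = \sum_i a_{i,\mu_1}$ of $m$ (each of the $n-\mu_1+1$ tensor factors contributes a factor of $b_1$ to the multiplicity, while the polynomial ring factor has degree $1$), we get $\deg I_n = b_1^{\,n-\mu_1+1}$. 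Summing the geometric series yields $\sum_{n \ge \mu_1-1}\deg I_n\, t^{n-\mu_1+1} = \sum_{k\ge 0}(b_1 t)^k = \frac{1}{1-b_1 t}$, as claimed.

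For the inductive step $r \ge 2$, the key input is that Lemma~\ref{rec-In-ses} produces, for $n \ge \delta_r$, the short exact sequence in the proof of Corollary~\ref{cor:comp In and Jn}:
\[
0 \to K[X_n]/\langle J_{n-\delta_r}\rangle_{K[X_n]}(-b_r) \to A_n \to K[X_n]/\langle I_{n-1}, \textstyle\prod_{i=1}^c x_{i,n}^{a_{i,\mu_r}}\rangle_{K[X_n]} \to 0.
\]
By Corollary~\ref{cor:comp In and Jn}(a), all three modules have the same Krull dimension $n(c-1)+\mu_r-1$ (using $\dim B_{n-\delta_r} + c\delta_r = (n-\delta_r)(c-1) + \mu_{r-1}-1 + c\delta_r = n(c-1)+\mu_r-1$, and that the third term is $A_{n-1}\otimes_K K[x_{c,n}]/(x_{c,n}^{b_r})$ up to the obvious renaming, hence has dimension $(n-1)(c-1) + \mu_r - 1 + 1$ — I would double-check this dimension count carefully since it is where the value of $c$ enters). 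Additivity of degree along a short exact sequence of modules of equal dimension then gives
\[
\deg I_n = \deg J_{n-\delta_r} + b_r \cdot \deg I_{n-1},
\]
the exact analogue of Equation~\eqref{eq:deg recursion}, where the factor $b_r$ is the degree of the monomial complete intersection $K[x_n]/(x_n^{b_r})$ tensored in.

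From here the argument is purely formal and identical to that in Proposition~\ref{prop:degree I_n}. By the induction hypothesis applied to $J$ (an orbit ideal with columns $\mu_1 < \cdots < \mu_{r-1}$ and column degrees $b_1,\ldots,b_{r-1}$), one has $\prod_{j=1}^{r-1}\frac{1}{1-b_j t} = \sum_{n \ge \mu_{r-1}-1}\deg J_n\, t^{n-\mu_{r-1}+1} = \sum_{n-\delta_r \ge \mu_{r-1}-1}\deg J_{n-\delta_r}\, t^{n-\mu_r+1}$, using $\mu_r = \mu_{r-1} + \delta_r$. Multiplying by $\frac{1}{1-b_r t} = \sum_{k\ge 0}b_r^k t^k$ and collecting the coefficient of $t^{n-\mu_r+1}$ gives $\sum_{i=0}^{n-\mu_r+1}b_r^{\,n-\mu_r+1-i}\deg J_{\mu_{r-1}-1+i}$; one then checks by a one-line induction on $n$, splitting off the $i = n-\mu_r+1$ term and invoking the recursion $\deg I_n = \deg J_{n-\delta_r} + b_r\deg I_{n-1}$, that this sum equals $\deg I_n$ (the base $n = \mu_r-1$ being $b_r^0 \deg J_{\mu_{r-1}-1} = 1 = \deg I_{\mu_r-1}$). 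The main obstacle, as noted, is not the bookkeeping — which is word-for-word the $c=1$ case with $a_j$ replaced by $b_j$ — but verifying cleanly that all three terms in the short exact sequence genuinely have the same Krull dimension, so that degree is additive; this rests on Corollary~\ref{cor:comp In and Jn}(a) and the isomorphisms in its proof, and is the one place where the parameter $c$ must be tracked with care.
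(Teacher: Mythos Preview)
Your approach is essentially the same as the paper's: derive the recursion $\deg I_n = b_r\deg I_{n-1} + \deg J_{n-\delta_r}$ from Lemma~\ref{rec-In-ses} and then reproduce the formal power-series argument of Proposition~\ref{prop:degree I_n} verbatim with $a_j$ replaced by $b_j$. The paper in fact records only that one recursion and then says ``we conclude as in the proof of Proposition~\ref{prop:degree I_n},'' so you have written out more than the authors do.

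One small correction at the point you yourself flagged: the right-hand term of the exact sequence is not $A_{n-1}\otimes_K K[x]/(x^{b_r})$ for a single variable $x$, but rather $A_{n-1}\otimes_K K[x_{1,n},\ldots,x_{c,n}]/(\prod_i x_{i,n}^{a_{i,\mu_r}})$. This hypersurface ring has dimension $c-1$ (not $1$) and degree $b_r$, so the dimension count reads $(n-1)(c-1)+\mu_r-1+(c-1)=n(c-1)+\mu_r-1$, matching the other two terms, and the degree contribution is $b_r\cdot\deg I_{n-1}$ as you need. Once this is fixed, your argument is complete.
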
 

\begin{proof} 
If $r\geq 2$ and $n \ge \delta_r$,  Lemma \ref{rec-In-ses} gives 
\begin{equation*} 
   \label{eq:deg recursion gen}
\deg I_n = b_r \deg I_{n-1} + \deg J_{n-\delta_r}.
\end{equation*}
Now we conclude as in the proof of Proposition \ref{prop:degree I_n}. 
\end{proof}      

Analogously to Corollary \ref{Remark-computationDegree} this gives the following explicit formula. 

\begin{corollary} 
    \label{Remark-computationDegree gen} 
If $b_1,...,b_r$ are pairwise distinct, then \; ${\displaystyle
\deg I_n = \sum\limits_{i=1}^r\displaystyle\frac{b_i^{n-\mu_r+r}}{\prod\limits_{j\neq i}(b_i-b_j)}  
}$,  provided $n \ge \mu_r - 1$.   
\end{corollary}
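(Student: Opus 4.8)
The plan is to derive the formula from Proposition \ref{prop:degree general} by partial fractions, exactly mirroring the proof of Corollary \ref{Remark-computationDegree}. Since $b_1,\ldots,b_r$ are pairwise distinct, the rational function $\prod_{j=1}^r \frac{1}{1-b_jt}$ has only simple poles, so there is a unique decomposition
\[
\prod_{j=1}^r \frac{1}{1-b_jt} = \sum_{i=1}^r \frac{C_i}{1-b_it}.
\]
The first step is to compute each $C_i$ by the standard residue formula: multiply both sides by $(1-b_it)$ and set $t = 1/b_i$, which gives $C_i = \prod_{j\neq i}\frac{1}{1-b_j/b_i} = \frac{b_i^{r-1}}{\prod_{j\neq i}(b_i-b_j)}$ (this uses $b_i \neq 0$, which holds since each $b_j$ is a positive integer as $x^{\underline a}$ has positive degree; and it uses $b_i \neq b_j$ for $i\neq j$, which is the hypothesis).

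The second step is to expand each summand as a geometric series: $\frac{C_i}{1-b_it} = C_i\sum_{k\geq 0} b_i^k t^k$, so the coefficient of $t^m$ in $\prod_{j=1}^r\frac{1}{1-b_jt}$ equals $\sum_{i=1}^r C_i b_i^m = \sum_{i=1}^r \frac{b_i^{m+r-1}}{\prod_{j\neq i}(b_i-b_j)}$. The third step is to invoke Proposition \ref{prop:degree general}: for $n \geq \mu_r - 1$, the degree $\deg I_n$ is the coefficient of $t^{n-\mu_r+1}$, i.e. we set $m = n-\mu_r+1$, obtaining $\deg I_n = \sum_{i=1}^r \frac{b_i^{n-\mu_r+1+r-1}}{\prod_{j\neq i}(b_i-b_j)} = \sum_{i=1}^r \frac{b_i^{n-\mu_r+r}}{\prod_{j\neq i}(b_i-b_j)}$, which is precisely the claimed formula.

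There is essentially no obstacle here: the proof is a routine partial-fraction computation, entirely parallel to the case $c=1$ treated in Corollary \ref{Remark-computationDegree}, with each $a_i$ replaced by the total column degree $b_i$. The only point worth a word of care is that the partial-fraction decomposition into simple poles is valid precisely because the $b_i$ are pairwise distinct; and that each $b_i$ is nonzero (needed to evaluate at $t=1/b_i$ and to run the residue computation), which is automatic since $\underline a$ is a nonzero matrix of nonnegative integers whose columns indexed by $\mu_1,\ldots,\mu_r$ are nonzero, forcing $b_j = \sum_{i=1}^c a_{i,\mu_j} \geq 1$. Accordingly the write-up can be as short as stating the decomposition, recording the $C_i$, expanding geometrically, and citing Proposition \ref{prop:degree general}.
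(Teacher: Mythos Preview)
Your proposal is correct and follows exactly the approach the paper intends: the paper does not give a separate proof of this corollary but simply states that it is obtained ``analogously to Corollary~\ref{Remark-computationDegree}'' from Proposition~\ref{prop:degree general}, which is precisely the partial-fraction computation you carry out with $a_i$ replaced by $b_i$.
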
 

For any $\Inc$-invariant ideal $I$ of $K[X]$, it is shown in \cite[Theorem 7.9]{NR} that the two limits $\lim\limits_{n \to \infty} \frac{\dim K[X_n]/I_n}{n}$ and  $\lim\limits_{n \to \infty} \sqrt[n]{\deg I_n}$
exist and are non-negative integers, where $I_n = I \cap K[X_n]$. Following  \cite[Remark 7.14]{NR},  we refer to these integers as the \emph{dimension}   of $K[X]/I$ and the \emph{degree} of $I$, respectively. If $I$ is generated by the orbit of a monomial, we obtain the following values.

\begin{corollary}
   \label{cor:asympt degree}
For $I = \la \Inc \cdot \prod_{i = 1}^c \prod_{j=1}^{\mu_r} x_{i,j}^{a_{i,j}} \ra$, one has 
\begin{itemize}   
\item[(a)] $\dim K[X]/I = c-1$. 

\item[(b)] $\deg I  = \max\{b_1,...,b_r\}. $
\end{itemize}
\end{corollary}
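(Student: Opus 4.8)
The plan is to read both invariants directly off the results already established for $I_n$ and $J_n$, rather than going through the equivariant Hilbert series. For part (a), I would invoke Corollary \ref{cor:comp In and Jn}(a), which says that for $n \ge \mu_r$ the ring $A_n = K[X_n]/I_n$ is Cohen--Macaulay of Krull dimension $n(c-1) + \mu_r - 1$. Dividing by $n$ and letting $n \to \infty$ gives $\lim_{n\to\infty} \frac{\dim K[X_n]/I_n}{n} = c-1$, which by the definition recalled just before the statement (following \cite[Remark 7.14]{NR}) is exactly $\dim K[X]/I$. So part (a) is essentially immediate.

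For part (b), I would use Proposition \ref{prop:degree general}: for $n \ge \mu_r - 1$, $\deg I_n$ is the coefficient of $t^{n-\mu_r+1}$ in $\prod_{j=1}^r \frac{1}{1-b_j t}$. Set $m = \max\{b_1,\dots,b_r\}$. Since each factor $\frac{1}{1-b_j t} = \sum_{k\ge 0} b_j^k t^k$ has non-negative coefficients and the coefficient of $t^k$ in $\frac{1}{1-mt}$ is $m^k$, comparing term by term gives $\deg I_n \le \binom{n-\mu_r+r}{r-1} m^{\,n-\mu_r+1}$ (a crude polynomial-times-$m^{n}$ bound suffices), hence $\limsup_n \sqrt[n]{\deg I_n} \le m$. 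For the reverse inequality, pick an index $i$ with $b_i = m$; expanding $\prod_{j=1}^r \frac{1}{1-b_j t}$ as a Cauchy product of the geometric series shows the coefficient of $t^{n-\mu_r+1}$ is a sum of non-negative terms, one of which is $b_i^{\,n-\mu_r+1} = m^{\,n-\mu_r+1}$ (take the $t^{n-\mu_r+1}$ term from the $i$-th factor and the constant term from every other factor). Therefore $\deg I_n \ge m^{\,n-\mu_r+1}$, so $\liminf_n \sqrt[n]{\deg I_n} \ge m$. Combining, $\lim_n \sqrt[n]{\deg I_n} = m = \max\{b_1,\dots,b_r\}$, which is the degree of $I$.

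Alternatively, and perhaps more cleanly, one can observe from the explicit partial-fraction formula: when the $b_j$ are pairwise distinct, Corollary \ref{Remark-computationDegree gen} gives $\deg I_n = \sum_{i=1}^r \frac{b_i^{\,n-\mu_r+r}}{\prod_{j\ne i}(b_i-b_j)}$, and the term with $b_i = m$ dominates asymptotically (its coefficient $\frac{m^{r}}{\prod_{j\ne i}(m-b_j)}$ is positive since $m$ is the strict maximum), giving $\sqrt[n]{\deg I_n} \to m$ at once; the general case then follows by the non-negativity argument above, or by a perturbation/limiting remark. I expect no real obstacle here: the only point requiring a line of care is making sure the crude upper bound on $\deg I_n$ really is of the form (polynomial in $n$) $\cdot\, m^{n}$, so that the polynomial factor washes out under the $n$-th root; this is routine since the coefficient of $t^{N}$ in $\prod_{j=1}^r\frac{1}{1-b_jt}$ is at most $\binom{N+r-1}{r-1} m^{N}$.
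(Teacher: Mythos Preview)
Your proposal is correct. For part~(a) you do exactly what the paper does: invoke Corollary~\ref{cor:comp In and Jn}(a) and pass to the limit.

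For part~(b) the paper's proof is a one-line reference: it says the claim ``follows by using partial fractions as in \cite[Lemma A.3]{NR}'' and leaves the details to the reader. Your primary argument takes a genuinely different, more elementary route: instead of decomposing $\prod_{j=1}^r \frac{1}{1-b_j t}$ into partial fractions (which requires some care when the $b_j$ are not pairwise distinct), you sandwich the coefficient of $t^N$ between $m^N$ and $\binom{N+r-1}{r-1}m^N$ directly from the Cauchy product, using only non-negativity of the terms. This has the advantage of being completely self-contained, needing no case distinction on whether the $b_j$ are distinct and no appeal to \cite{NR}. The partial-fraction approach, on the other hand, gives finer asymptotics (an actual leading term for $\deg I_n$, not just the exponential growth rate), which is unnecessary here but would matter if one wanted more. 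Your ``alternative'' paragraph is essentially the paper's intended argument in the distinct case; your remark that the general case needs the non-negativity argument or a perturbation step is exactly the detail the paper suppresses.
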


\begin{proof}
(a) is a consequence of Corollary \ref{cor:comp In and Jn}. 

(b) follows by using partial fractions as in \cite[Lemma A.3]{NR}. We leave the details to the interested reader.
\end{proof}

We conclude with some comments about non-negativity of the coefficients of the polynomials appearing in an equivariant Hilbert series. 

\begin{remark}
     \label{rem:non-neg coeff}
If $A$ is a graded Cohen-Macaulay quotient  of a noetherian polynomial ring, then it is well-known that the numerator polynomial in its reduced Hilbert series has non-negative coefficients only. 
We have seen above that in the case of an $\Inc$-invariant ideal $I$ of $K[X]$ the condition that all rings $K[X_n]/I_n$ are Cohen-Macaulay is not sufficient to guarantee that the numerator polynomial $g(s, t)$ in a reduced Hilbert series of $K[X]/I$ as in Equation \eqref{eq: general equiv Hilb series} has non-negative coefficients only (see, e.g., Example \ref{exa:numerator pol small r}).  However, the coefficients in the polynomials $f_j (t)$ appearing in the denominator of the Hilbert series all have non-negative coefficients if $I$ is generated by the orbit of a monomial. This suggests the following question.  
\end{remark} 

\begin{question}
Assume $I$ is an $\Inc$-invariant ideal of $K[X]$ such that each ring $K[X_n]/I_n$ is Cohen-Macaulay. Is it then true that the coefficients of the polynomials $f_j (t)$ appearing in the reduced form of the Hilbert series \eqref{eq: general equiv Hilb series} are all non-negative? 
\end{question}

%
%
%
%
%
%
%

%


\begin{thebibliography}{99}

\bibitem{C} D.~E.~Cohen, \emph{On the laws of a metabelian variety},  J.\ Algebra {\bf 5} (1967), 267--273.

\bibitem{Draisma-factor}
J.~Draisma, \emph{Finiteness of the $k$-factor model  and chirality varieties}, Adv.\ Math.\ {\bf 223} (2010), 243--256.


\bibitem{DK}
J.~Draisma and J.~Kuttler, \emph{Bounded-rank tensors are defined in bounded degree}, Duke Math.\ J.\ {\bf 163} (2014), 35--63.


\bibitem{HS}
C.~J.~Hillar and S.~Sullivant, \emph{Finite Gr\"obner bases in infinite dimensional polynomial rings and applications}, Adv.\ Math. {\bf 229} (2012), 1--25.

\bibitem{KMMNP}
J.\ Kleppe, J.\ Migliore, R.M.\ Mir\'o-Roig, U.\ Nagel, and C.\
Peterson, {\em Gorenstein liaison, complete intersection liaison invariants and
unobstructedness}, Mem.\ Amer.\ Math.\ Soc.\ {\bf 154} (2001), no.\
  732.
  
\bibitem{KLS}
R.\ Krone, A.\ Leykin, and A.\ Snowden, {\em Hilbert series of symmetric ideals in infinite polynomial rings via formal languages}, Preprint, 2016; arXiv:1606.07956.    

\bibitem{NR}
U. Nagel, T. R\"omer, {\em Equivariant Hilbert series in non-Noetherian Polynomial Rings}, Preprint, 2015;  arXiv:1510.02757. 

\bibitem{SS-14}
S.\ V.\  Sam and  A.\ Snowden, \emph{Gr\"obner methods for representations of combinatorial categories}, J.\ Amer.\ Math.\ Soc.\ (to appear);  arXiv:1409.1670


\bibitem{S}
A.\ Snowden, \emph{Syzygies of Segre embeddings and $\Delta$-modules}, Duke Math.\ J.\ {\bf 162} (2013), 225--277.


\end{thebibliography}
\end{document}